\documentclass{article}

\newcommand{\Addresses}{{
		\bigskip
		\footnotesize
		
		\textsc{Department of Mathematics, Technion -- Israel Institute of Technology, Haifa, Israel}\par\nopagebreak
		\textit{E-mail address:} \texttt{ofir.gor@technion.ac.il}

}}

\title{A Kubilius model for sieve-theoretic sequences}
\author{Ofir Gorodetsky}
\date{}

\usepackage{amssymb}
\usepackage{amsfonts}
\usepackage{amsmath}
\usepackage{amsthm}
\usepackage{xcolor}
\usepackage{hyperref}

\usepackage[margin=1in]{geometry}

\theoremstyle{plain}
\newtheorem{thm}{Theorem}[section]
\newtheorem{lem}[thm]{Lemma}  
\newtheorem{proposition}[thm]{Proposition}
\newtheorem{cor}[thm]{Corollary}
\theoremstyle{remark}
\newtheorem{rem}{Remark}[section]

\newcommand{\PP}{\mathbb{P}}
\newcommand{\RR}{\mathbb{R}}
\newcommand{\ZZ}{\mathbb{Z}}
\newcommand{\NN}{\mathbb{N}}

\newcommand{\Pa}{\mathcal{P}}
\newcommand{\dtv}{\mathrm{d}_{\mathrm{TV}}}
\newcommand{\Geom}{\mathrm{Geom}}

\numberwithin{equation}{section}

\begin{document}

\maketitle

{\centering To Professor J\'anos Pintz on his 75th birthday\par}
\begin{abstract}
We bound the total variation distance in the Kubilius model for sequences with positive level of distribution. We obtain a result that we expect is qualitatively optimal. As a special case, it recovers a recent result of Ford on shifted primes, with a slightly simplified proof. In the classical case considered by Kubilius, our theorem gives a simple proof of the optimal bound discovered by Tenenbaum, up to factors of $x^{o(1)}$ and $u^{o(u)}$. 
\end{abstract}
\section{Introduction and history}
Given a prime number $p$ and a positive integer $n$, let $\nu_p(n)$ be the exponent of $p$ dividing $n$, i.e.~$p^{\nu_p(n)}$ divides $n$ but $p^{\nu_p(n)+1}$ does not divide $n$. Given discrete random variables $X$ and $Y$ taking values in a countable set $\Omega$, their total variation distance is defined as
\[ \dtv(X,Y) := \sup_{A\subseteq \Omega} |\PP(X\in A)-\PP(Y\in A)|,\]
and is known to satisfy \cite[Eq.~(3.7)]{ABT}
\begin{equation}\label{eq:dtv1} 
	\dtv(X,Y) = \frac{1}{2}\sum_{\omega \in\Omega} |\PP(X=\omega)-\PP(Y=\omega)|.
\end{equation}
\subsection{Kubilius' work}\label{sec:kub}
Let $N_x$ be an integer chosen uniformly at random from $\NN\cap [1,x]$. In Chapter 2 of his book \cite{Kubiliusbook1962} (later translated to English \cite{KubTranslate}),
Kubilius introduced -- and implemented -- the idea of modeling the tuple
\begin{equation}\label{eq:tup1}
(\nu_p(N_x))_{p\le y}
\end{equation}
of dependent random variables by a tuple of independent random variables
\begin{equation}\label{eq:tup2}
(\Geom_p)_{p\le y}
\end{equation}
where $\PP(\Geom_p=k)=p^{-k}(1-1/p)$ for all $k\in \NN_{\ge 0}$. This idea has its origins in Kubilius' 1956 survey paper \cite{Kubpaper} (also translated to English \cite{KubTranslatepaper}). Kubilius studied the total variation distance between the random tuples \eqref{eq:tup1} and \eqref{eq:tup2}. He showed that there exists an absolute constant $c>0$ such that  \cite[p.~27]{KubTranslate}
\begin{equation}\label{eq:tvk}
\dtv( (\nu_p(N_x))_{p\le y}, (\Geom_p)_{p\le y})\ll e^{-cu}
\end{equation}
holds uniformly for $2\le y \le x$ where
\[ u:=\frac{\log x}{\log y}.\]
In particular, $\dtv( (\nu_p(N_x))_{p\le y}, (\Geom_p)_{p\le y})$ goes to $0$ as soon as $u$ tends to infinity.\footnote{This range is known to be optimal, i.e.~the distance does not go to zero if $u$ is bounded  \cite[p.~145]{Elliott1}  \cite[Th\'{e}or\`{e}me 1]{TenenbaumCrible}.} The bound \eqref{eq:tvk} and the model are both known as the \textit{Kubilius model}. The bound  \eqref{eq:tvk}  is a powerful tool in probabilistic number theory, and led Kubilius to definitive results on the distribution of additive functions  \cite[Chapters IV, VI]{KubTranslate}. See also Elliott's treatment of Kubilius' work on additive functions \cite[Chapter 12]{Elliott2} (which includes a short discussion comparing it with the earlier work of Erd\H{o}s and Kac \cite[p.~26]{Elliott2}).

Using a technical result of Barban and Vinogradov \cite{BV}, Kubilius \cite[Equation~(10)]{Brownian} improved \eqref{eq:tvk} significantly to
\begin{equation}\label{eq:tvk2}
	\dtv( (\nu_p(N_x))_{p\le y}, (\Geom_p)_{p\le y})\ll v^{-dv},\qquad v:=\frac{\log x}{\log \max\{y,\log x\}}
\end{equation}
for some absolute constant $d>0$.\footnote{Strictly speaking, Kubilius considered $\mathbf{1}_{p\mid N_x}$ instead of $\nu_p(N_x)$, and a Bernoulli random variable instead of $\Geom_p$.} The constant $d$ has been made explicit by Elliott \cite[pp.~119--123]{Elliott1}, who showed that
\begin{equation}\label{eq:tvbv}
	\dtv( (\nu_p(N_x))_{p\le y}, (\Geom_p)_{p\le y})\ll u^{-u/8}+ x^{-1/15}
\end{equation}
holds uniformly for $2\le y \le x$.
\subsection{Elliott's generalization}\label{sec:ellgen}
The arguments of Kubilius are sieve-theoretic and are quite flexible. Several variants of \eqref{eq:tvbv} were derived by adapting the argument of Kubilius, and most of them are discussed in Chapter 3 of Elliott \cite{Elliott1}. The chapter contains  rich material on the history of the topic and interesting concluding remarks. Let $h\in \ZZ[t]$. We mention two variants of \eqref{eq:tvbv}:
\begin{itemize}
	\item  During the period 1959--1967, U\v{z}davinis wrote several papers concerning additive functions evaluated on $h(n)$. Let  $(W_p)_{p_0<p\le y}$ be independent  Bernoulli random variables with $\PP(W_p=1)=\rho(p)/p$ where $\rho$ is the number of distinct roots of $h$ modulo $p$; here $p_0$ is chosen sufficiently large so that $\rho(p)<p$ holds for $p>p_0$. U\v{z}davinis \cite{Uzd} implicitly bounded the total variation distance between $(\mathbf{1}_{p\mid h(N_x)})_{p_0<p\le y}$ and $(W_p)_{p_0<p\le y}$. His result is stated and  discussed in Kubilius' book \cite[p.~28]{KubTranslate}. We state the result with the error term obtained by Elliott \cite[pp.~131--134]{Elliott1}, using the input from \cite{BV}:
	\begin{equation}\label{eq:Uzdavinis}
		\dtv( (\mathbf{1}_{p\mid h(N_x)})_{p_0<p\le y}, (W_p)_{p_0<p\le y})\ll_h  u^{-u/50}+x^{-1/4}.
	\end{equation}
	\item Let $P_{x}$ be a prime number chosen uniformly at random from the set of primes up to $x$. Let  $(\widetilde{W}_p)_{p_0<p\le y}$ be independent  Bernoulli random variables with $\PP(\widetilde{W}_p=1)=\eta(p)/(p-1)$ where $\eta$ is the number of distinct roots of $h$ modulo $p$  \textit{excluding $0\bmod p$}; here $p_0$ is chosen sufficiently large so that $\eta(p)<p-1$ holds for $p>p_0$. Barban \cite{Barban} established a Kubilius model for the factorization of $h(P_x)$, which Elliott \cite[pp.~134--136]{Elliott1} improved as follows using \cite{BV}: for every $B>0$,
	\begin{equation}\label{eq:Barban}
		\dtv( (\mathbf{1}_{p\mid h(P_x)})_{p_0<p\le y}, (\widetilde{W}_p)_{p_0<p\le y})\ll_{B,h} u^{-u/75}+(\log x)^{-B}.
	\end{equation}
\end{itemize}
The bounds \eqref{eq:Uzdavinis}--\eqref{eq:Barban} are special cases of the following general result of Elliott. We need the following notation: given a subset $\Pa$ of primes, let $\langle \Pa \rangle$ be the set of positive integers divisible only by those primes, i.e.~the semigroup generated by $\Pa$. 
\begin{thm}\cite[pp.~129--132]{Elliott1}\label{thm:Kub}
Let $A$ be a random variable  taking values in $[1,x]\cap \NN$. Let $y\in [2,x]$. Let $\Pa$ be a subset of the primes up to $y$. Let $g\colon \NN\to [0,1]$ be a multiplicative function with $g(d)=0$ for all $d\not\in \langle \Pa\rangle$ and  $g(p)<1$ for all $p\in \Pa$. Let $(B_p)_{p\in \Pa}$ be independent Bernoulli random variables with $\PP(B_p=1)=g(p)$. 	Define
\[ S=\sum_{p\le y}\frac{g(p)}{1-g(p)}\log p.\] 
Then, 	for any parameter $z$ that satisfies $\log z\ge 8\max\{\log y,S\}$, we have
	\[	\dtv( (\mathbf{1}_{p\mid A})_{p\in \Pa}, (B_p)_{p \in \Pa}) \le 10 \exp\bigg( -\frac{\log z}{8\log y}\log\bigg( \frac{\log z}{S}\bigg)\bigg)+12\sum_{\substack{m\le z^4 \\m\in \langle \Pa\rangle}}\mu^2(m)4^{\omega(m)}|\PP(m \mid A)-g(m)|.\]
	\end{thm}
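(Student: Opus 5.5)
We reduce the total variation distance to comparing the probabilities of the atoms of the two models, compute those probabilities by M\"obius inversion, and control the inversion with a smooth-number (Rankin) truncation. Write $\Pa=\{p_1,\dots\}$.

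For a subset $T\subseteq \Pa$, consider the atom
\[
\{\mathbf 1_{p\mid A}=1 \text{ for } p\in T,\ \mathbf 1_{p\mid A}=0 \text{ for } p\in \Pa\setminus T\}.
\]
By \eqref{eq:dtv1}, $2\dtv$ is the sum over $T$ of the absolute difference between the probability of this atom and
\[
\prod_{p\in T} g(p)\prod_{p\in\Pa\setminus T}(1-g(p)).
\]
Set $d_T=\prod_{p\in T}p$. Inclusion--exclusion gives
\[
\PP(\text{atom } T)=\sum_{e\mid \prod_{p\in \Pa\setminus T}p}\mu(e)\,\PP(d_Te\mid A),
\]
and the model probability is the same expression with $\PP(m\mid A)$ replaced by $g(m)$. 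This exact expansion involves $m=d_Te$ up to $\prod_{p\in\Pa}p$, which is far larger than $z^4$, so it must be truncated.

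\textbf{Splitting into $T$ with small and large $d_T$.} First handle the large $T$. Since each atom $T$ forces $d_T\mid A$, the total probability of atoms with $d_T>z$ is at most
\[
\PP(\exists\, m\mid A,\ m\in\langle\Pa\rangle \text{ squarefree},\ m>z,\ m \text{ minimal}).
\]
The model side is bounded by $\sum_{d_T>z}\prod_{p\in T}g(p)\prod_{p\notin T}(1-g(p))$. Apply Rankin's trick with $d_T^{\sigma}z^{-\sigma}$:
\[
\sum_{d_T>z}\prod_{p\in T}g(p)\prod_{p\notin T}(1-g(p))\le z^{-\sigma}\prod_{p\in\Pa}\bigl(1-g(p)+g(p)p^{\sigma}\bigr)\le \exp\Bigl(-\sigma\log z+\sum_p \tfrac{g(p)}{1-g(p)}(p^{\sigma}-1)\Bigr).
\]
Since $p\le y$, we have $p^\sigma-1\le \sigma \log p\, y^{\sigma}$, so the exponent is at most $-\sigma\log z+\sigma S y^\sigma$. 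Choosing $\sigma=\log(\log z/S)/(2\log y)$, roughly, makes this $\le \exp(-\tfrac{\log z}{8\log y}\log\tfrac{\log z}{S})$ after adjusting constants, using $\log z\ge 8\max(\log y,S)$.

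For the $A$ side of large $T$, the event ``$A$ has $\Pa$-squarefree part $>z$'' implies that $A$ is divisible by some squarefree $m\in\langle\Pa\rangle$ with $z<m\le zy$; take the smallest such product built up prime by prime. Summing $\PP(m\mid A)\le g(m)+|\PP(m\mid A)-g(m)|$ over this range gives the model Rankin bound plus a piece of the error sum with $m\le z^2$.

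\textbf{Small $T$ ($d_T\le z$).} Here we truncate inclusion--exclusion with Bonferroni-type bounds. The alternating sum over $e$ is bracketed between partial sums where $\omega(e)\le k$ (even or odd $k$), as in the fundamental lemma / Brun's pure sieve. Alternatively, and more cleanly, restrict $e$ to $e\le z^3$ via a Rankin bound on the tail. The difference between the $A$ atom and the model atom is then at most:
\begin{itemize}
\item the truncated differences $\sum_{e}|\PP(d_Te\mid A)-g(d_Te)|$, and
\item the truncation tails for both sides, each dominated by model quantities plus error sums.
\end{itemize}
Summing over $T$ with $d_T\le z$, each $m=d_Te\le z^4$ arises from at most $2^{\omega(m)}$ pairs. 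The Bonferroni tails (upper minus lower bound) contribute another $2^{\omega(m)}$, which explains the weight $4^{\omega(m)}$.

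The tail of the model sum $\sum_{\omega(e)>k}g(d_Te)$ is bounded via $\sum_e g(e)\,\omega(e)$-type Rankin estimates. With $k\approx \log z/\log y$ (so that $\omega(e)\le k$ forces $e\le y^k\le z$), this tail is $\le (eS'/k)^k$ where $S'=\sum_p g(p)/(1-g(p))$. This yields the same exponential shape; a cleaner route is to use weights $\log p$, comparing with $S$.

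\textbf{Main obstacle.} The delicate step is the Bonferroni truncation. We need the sieve remainder terms to be controlled simultaneously for all $T$, with the combinatorial multiplicity $4^{\omega(m)}$ and the constants $10,12,8$. We would first try Brun's pure sieve with cutoff $\omega(e)\le k$. We would then bound the sieve tail $\sum_{\omega(e)=k}g(e)\le S_1^k/k!$, using $\log p\ge\log 2$ to relate $S_1$ to $S$ up to constants. Optimizing $k$ against $\log z/\log y$ should produce the stated bound.
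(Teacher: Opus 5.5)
The paper only quotes this theorem from Elliott and gives no proof of it. Its proof of Theorem \ref{thm:KM} follows the same Kubilius--Elliott scheme, and measured against that proof your proposal has two genuine gaps. Your Rankin bound for the \emph{model} tail is fine.

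\textbf{First gap: the large atoms.} Since you start from \eqref{eq:dtv1}, you must also bound the $A$-side probability that the squarefree $\Pa$-part of $A$ exceeds $z$. Your union bound over the minimal $m\in(z,zy]$ produces $\sum_{z<m\le zy}\mu^2(m)g(m)$. This is not the model quantity $\sum_{d_T>z}\prod_{p\in T}g(p)\prod_{p\in\Pa\setminus T}(1-g(p))$, because the factor $\prod_{p\notin T}(1-g(p))$ is missing. You therefore lose roughly $\prod_{p\in\Pa}(1-g(p))^{-1}$, which is unbounded. For example, take $\Pa$ to be all primes up to $y$, $g(p)=1/p$, and $\log z=K\log y$ with $K\ge 9$ fixed. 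Then $S=\log y+O(1)$, so the first term on the right-hand side of the theorem tends to $10K^{-K/8}$, whereas $\sum_{z<m\le zy,\,P(m)\le y}\mu^2(m)/m\gg_K\log y$. The remedy is what the paper does in Lemma \ref{lem:tv}: use \eqref{eq:dtvmax} in the form $\sum\max\{\text{model atom}-\text{$A$-atom},0\}$. Then atoms with $d_T>z$ contribute only the model tail. Equivalently, pass to complements and bound the $A$-side tail by the model tail plus $\sum_{d_T\le z}|\text{model atom}-\text{$A$-atom}|$.

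\textbf{Second gap: the sieve step for $d_T\le z$, which is the heart of the matter.} Cutting the M\"obius sum at $e\le z^3$ ``via a Rankin bound on the tail'' is not available on the $A$-side. A size cutoff is not a one-sided sieve, and nothing is known about $\PP(d_Te\mid A)$ for large $e$. Brun's pure sieve does give one-sided bounds, but with a single cutoff $\omega(e)\le k$, where $y^k\le z^3$ forces $k\le 3\log z/\log y$. The truncation error for the atom $T$ is then up to $g(d_T)G^{k+1}/(k+1)!$ with $G=\sum_{p\in\Pa}g(p)$. That is, it is measured against $g(d_T)$ rather than against the atom $g(d_T)\prod_{p\notin T}(1-g(p))$. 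Summed over $T$ this is up to $e^{G}G^{k+1}/(k+1)!$, which is small only when $k$ is much larger than $G$. In the example above, $G=\log\log y+O(1)$ while $k\le 3K$ stays bounded, so the error genuinely blows up as $y\to\infty$. Bounding $G$ by $S/\log 2$ only makes this worse, and no choice of $k$ rescues it; this is the classical limitation of the pure sieve.

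What is missing is a genuine fundamental lemma: Brun's sieve with range-dependent truncations, the Brun--Hooley sieve of Proposition \ref{prop:sieve}, or a $\beta$-sieve. Its error is relative to the main term $\prod(1-g(p))$ and decays like $s^{-s}$ in $s=\log(\text{level})/\log y$, regardless of the size of $\sum_p g(p)$. The paper applies such a lemma atom by atom, as follows.
\begin{itemize}
\item It writes the $A$-atom as $\PP(d\mid A)\,\PP(\gcd(A_d,\prod_{p\in\Pa}p)=1)$, where $A_d$ is $A/d$ conditioned on $d\mid A$.
\item It sieves $A_d$ with the multiplicative function $n\mapsto g(nd)/g(d)$ at a level inversely proportional to $d$ (Corollary \ref{cor:flprob}).
\item It then unfolds the remainders using
\[
\PP(d\mid A)\,\bigl|\PP(e\mid A_d)-g(de)/g(d)\bigr|\le |\PP(de\mid A)-g(de)|+\frac{g(de)}{g(d)}|\PP(d\mid A)-g(d)|.
\]
\end{itemize}
This last step is where the divisor-type weights on $m=de$ come from.
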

	In the statement of Theorem \ref{thm:Kub}, $\PP(d\mid A)$ is the probability that $A$ is divisible by $d$. To illustrate Theorem \ref{thm:Kub}, we apply it in a special case. Suppose $A=N_x$ and that $\Pa$ is the set of all primes up to $y$. Let $g(d)=1/d$ for $d\in \langle \Pa \rangle$ so that $S=\log y + O(1)$ by Mertens' theorem. Since $\PP(d\mid N_x)-1/d \ll 1/x$, Theorem \ref{thm:Kub} implies that
\[	\dtv( (\mathbf{1}_{p\mid N_x})_{p\le y}, (B_p)_{p \le y}) \ll \exp\left( -\frac{\log z}{8\log y}\log\left( \frac{\log z}{S}\right)\right)+x^{-1}\sum_{\substack{m\le z^4\\ m\in \langle \Pa \rangle}}\mu^2(m)4^{\omega(m)}\]
holds for any $x\ge y \ge 2$ and any $z\ge Cy^8$. Taking $z=x^{1/5}$ (which is a valid choice if $u$ is sufficiently large) yields the bound $u^{-u/50}+x^{-1/6}$.
\subsection{Tenenbaum's bound}
Utilizing complex analysis, Tenenbaum \cite[Th\'{e}or\`{e}me 1]{TenenbaumCrible} obtained a further improvement to \eqref{eq:tvbv}, which in some ranges of $x$ and $y$ even gives an asymptotic formula for the total variation distance. While his result is technical to state,  it implies  the following simple bound \cite[Equation~(1.7)]{TenenbaumCrible}: for every $\varepsilon>0$,
\begin{equation}\label{eq:tenen}
	\dtv( (\nu_p(N_x))_{p\le y}, (\Geom_p)_{p\le y})\ll_{\varepsilon}  u^{-u}+x^{-1+\varepsilon}.
\end{equation}
\subsection{Ford's result on shifted primes}\label{sec:ford}
Fix $a\in \ZZ\setminus \{0\}$. Ford \cite{FordShifted} established a sharp analogue of the Kubilius model for shifted primes, which improves on \eqref{eq:Barban} when $h(t)=t+a$. To state it, let $(\widetilde{G}_p)_{p \le y}$ be independent random variables with \[\PP(\widetilde{G}_p=k)=\frac{1}{\phi(p^k)}-\frac{1}{\phi(p^{k+1})}=\begin{cases} 1-\frac{1}{p-1} &\text{if }k=0,\\p^{-k}&\text{if }k\ge 1,\end{cases}\]
if $p\nmid a$, and $\widetilde{G}_p\equiv 0$ if $p\mid a$. Suppose that $\gamma\in (0,1]$ is such that the following hypothesis holds:
\begin{equation}\label{eq:Hz}
\forall \varepsilon>0\, \exists \delta>0\, \forall B>0:  \sum_{\substack{m\le x^{\gamma-\varepsilon}\\P(m)\le x^{\delta}\\(m,a)=1}}\bigg|\pi(x;m,-a)-\frac{\pi(x)}{\phi(m)}\bigg| \ll_{B,\varepsilon,a} \frac{x}{(\log x)^B}
\end{equation}
where $\pi(x;m,-a)$ is the number of primes up to $x$ congruent to $-a\bmod m$, and  $P(m)$ is the largest prime divisor of $m$ (with $P(1):=1$). Let $P_{x,a}$ be a prime chosen uniformly at random from the primes in $(|a|+1,x]$. Then one has the bound \cite[Theorem~1]{FordShifted}
\begin{equation}\label{eq:ford}
\dtv( (\nu_p(P_{x,a}+a))_{p\le y},(\widetilde{G}_p)_{p\le y} )\ll_{a,B,\theta} u^{-\theta u} + (\log x)^{-B}
\end{equation}
for every $\theta \in (0,\gamma)$ and  every $B>0$, uniformly for $2\le y \le x$. One may take $\gamma=1/2$ due to the Bombieri--Vinogradov theorem. The exponent $1/2$ should be compared with the exponent $1/75$ in \eqref{eq:Barban}.
\subsection{Permutations}
A Kubilius model for permutations was studied by Arratia and Tavar{\'e} \cite[Theorem 2]{AT}, and their result is a perfect analogue of \eqref{eq:tenen}.  Ford \cite[Theorem 1.19]{Toolkit} obtained a slightly weaker result than Arratia and Tavar{\'e}'s, but with a considerably simpler proof, inspired by the work of Kubilius. Manstavi{\v{c}}ius and Petuchovas 
\cite[Theorem 3]{MP} adapted the main result of Tenenbaum \cite{TenenbaumCrible} to permutations (cf.~\cite{petuchovasthesis}). For an exposition of the Kubilius model for permutations and integers, see Chapter 4 of Ford's notes \cite{FordNotesAnatomy}.
\subsection*{Conventions}
Throughout the paper, the implicit constants in bounds of the shape $A\ll B$ or $A=O(B)$ are absolute, unless indicated otherwise using subscripts.
\section{Main result}
Our main theorem is a sharpening of Theorem \ref{thm:Kub}. Our proof builds on the sieve-theoretic approach laid out by Kubilius and generalized by Elliott, but also borrows the clever choice of parameters from Ford's work on shifted primes \cite{FordShifted}, and the particular fundamental lemma of sieve theory Ford used. 

Informally, our result will say the following: let $A$ be a random variable taking values in $[1,x]\cap \NN$. Let $g$ be a multiplicative function taking values in $[0,1]$, such that $g(p)\sim \kappa/p$ holds `on average' for some $\kappa>0$. Define $(G_p)_p$ to be independent random variables such that $\PP(G_p=k)=g(p^k)-g(p^{k+1})$ for every $k\in \NN_{\ge 0}$. Then, roughly speaking, we shall show that
	\[	\dtv( (\nu_p(A))_{p\le y}, (G_p)_{p \le y})\, ``\ll \,"  u_T^{-u_T}+ \sum_{\substack{d\le T\\ P(d)\le y}} |\PP(d\mid A)-g(d)|\]
	holds, where $T$ is a parameter at one's disposal and $u_T:=\log T/\log y$. In comparison, Theorem \ref{thm:Kub} essentially gives the weaker bound
	\[	\dtv( (\nu_p(A))_{p\le y}, (G_p)_{p \le y})\, ``\ll \," u_T^{-u_T}+ \sum_{\substack{d\le T^{32}\\ P(d)\le y}} |\PP(d\mid A)-g(d)|\]
	if one replaces Elliott's $z$ with $T^{8}$. Before stating the full result we give two applications. 
\begin{cor}\label{cor:giden}
	Let $A$ be a random variable taking values in $[1,x]\cap \NN$. Let $y\in [2,x]$. Let $(\Geom_p)_{p\le y}$ be independent random variables defined as in \S\ref{sec:kub}. For every $\theta \in (0,1)$ and $\varepsilon>0$, we have
	\begin{equation}\label{eq:dtvgeom}
	\dtv( (\nu_p(A))_{p\le y}, (\Geom_p)_{p \le y}) \ll_{\theta,\varepsilon} u^{-\theta u(1-\varepsilon)}+x^{-\theta(1-\varepsilon)}+ x^{1/\log \log x}\sum_{\substack{d\le x^{\theta}\\ P(d)\le y}} |\PP(d\mid A)-1/d|.
\end{equation}
\end{cor}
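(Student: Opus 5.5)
Corollary \ref{cor:giden} should be the special case $g(d)=1/d$ of the main theorem, which the paper states later. Since that theorem is not yet available, I would prove the corollary directly by a Kubilius-type sieve argument, with parameter $T=x^{\theta}$ and the error measured by $\sum_{d\le T,\,P(d)\le y}|\PP(d\mid A)-1/d|$.

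\textbf{Step 1 (truncation).} Fix $\theta,\varepsilon$ and put $T=x^{\theta}$, $u_T=\theta u$. Write $\dtv$ as half the $\ell^1$ sum in \eqref{eq:dtv1} over $y$-smooth values $n=\prod_{p\le y}p^{k_p}$. For $n\le T^{1-\varepsilon'}$ we compare $\PP(\prod_{p\le y}p^{\nu_p(A)}=n)$ with $\PP(\prod_{p\le y} p^{\Geom_p}=n)=\frac{1}{n}\prod_{p\le y}(1-1/p)$. Large $n$ are handled by tail bounds. On the model side, Rankin's trick gives
\[\PP\Big(\prod_{p\le y} p^{\Geom_p}>T^{1-\varepsilon'}\Big)\le T^{-(1-\varepsilon')\sigma}\prod_{p\le y}\frac{1-1/p}{1-p^{\sigma-1}}.\]
Choosing $\sigma\approx \log(u_T\log u_T)/\log y$ yields $u_T^{-u_T(1-\varepsilon)}$ once $u$ is large. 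When $y$ is so large that $u$ is bounded, we take $\sigma$ of size $1/\log\log x$ instead; this produces the factor $x^{1/\log\log x}$, which also controls $\prod_{p\le y}(1+O(p^{\sigma-1}))$. The tail on the $A$ side is reduced to the model tail plus the complement of the small-$n$ mass. Since both distributions have total mass $1$, the small-$n$ comparison controls this complement.

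\textbf{Step 2 (local comparison via the fundamental lemma).} For fixed $n\le T^{1-\varepsilon'}$, the event $\{y\text{-part of }A=n\}$ equals $\{n\mid A\}$ minus the events $\{np\mid A\}$, $p\le y$, in the sense of inclusion--exclusion over squarefree $d\mid P(y)$. We apply a fundamental lemma sieve (the Ford/Brun-type weights mentioned in \S2) with level $D=T/n$. Upper and lower sieve weights $\lambda^\pm_d$ supported on $d\le D$ give
\[\PP(n\text{-part}=n)\lessgtr \sum_{d}\lambda_d^{\pm}\PP(nd\mid A)=\sum_d\lambda^\pm_d\frac1{nd}+O\Big(\sum_{d\le D}|\PP(nd\mid A)-\tfrac1{nd}|\Big).\]
The main term equals $\frac1n\prod_{p\le y}(1-1/p)\,(1+O(s^{-s}))$ with $s=\log D/\log y$. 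Summing the error over $n$, every $m=nd\le T$ is counted at most $\tau(m)\le x^{o(1)}$ times, which gives the stated error sum with factor $x^{1/\log\log x}$ (after adjusting constants).

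\textbf{Step 3 (main obstacle: summing the sieve remainder).} It remains to bound $\sum_{n\le T^{1-\varepsilon'}}\frac1n\prod_{p\le y}(1-1/p)\,s_n^{-s_n}$ with $s_n=\log(T/n)/\log y$. This is the step I expect to be delicate: for $n$ close to $T$ the fundamental lemma gives nothing, so I would handle $n$ in dyadic-in-$\log$ ranges. The weight $\frac1n\prod(1-1/p)$ summed over $y$-smooth $n\in[N,N^2]$ is essentially $\PP(\prod_{p\le y}p^{\Geom_p}\approx N)$, which decays like $u_N^{-u_N}$ by Rankin. Combining $u_N^{-u_N}(u_T-u_N)^{-(u_T-u_N)}\le (u_T/2)^{-u_T}$ loses a constant in the exponent. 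To reach the exponent $1-\varepsilon$, I would instead cut at $n\le T^{\varepsilon}$, where the sieve gives $s^{-s}$ with $s\ge(1-\varepsilon)u_T$. I would place $n\in(T^{\varepsilon},T^{1-\varepsilon'}]$ into the tail of Step 1, which Rankin bounds by $u_T^{-\varepsilon u_T}$. That bound is too weak, so the cut must be balanced more carefully, perhaps using the Ford-type choice of sieve level $D$ that varies with $n$. This balancing is the crux, and the $x^{-\theta(1-\varepsilon)}$ term absorbs the range where $y$ is tiny relative to $\log x$.
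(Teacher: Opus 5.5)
Your skeleton matches the paper's. The paper applies Theorem \ref{thm:KM} with $g(n)=1/n$ on $y$-smooth $n$, $T=x^{\theta}/y$ and $S=Ty$. This amounts to sieving each fibre $\{\prod_{p\le y}p^{\nu_p(A)}=n\}$ with the Brun--Hooley fundamental lemma at level $S/n$, bounding model tails by Rankin's trick, and absorbing the multiplicity of $m=nd$ into $x^{1/\log\log x}$. Your $\ell^1$ formula plus the total-mass trick is equivalent, up to a factor $2$, to the paper's use of \eqref{eq:dtvmax}. Sieving $\PP(nd\mid A)$ directly is also legitimate, and even avoids the $\log y$ in the multiplicity. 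The gap is Step 3. You assert that the natural block estimate loses a constant in the exponent, then replace it by a cut at $n\le T^{\varepsilon}$, which you yourself note gives only $u_T^{-\varepsilon u_T}$, and leave the ``balancing'' open. As written, nothing bounds the sieve error summed over $n\le T^{1-\varepsilon'}$ by $u^{-\theta u(1-\varepsilon)}$.

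That obstruction comes from a miscalculation. Since $(u_T/2)^{-u_T}=u_T^{-u_T}2^{u_T}=u_T^{-u_T(1-\log 2/\log u_T)}$, halving the base costs only $e^{O(u_T)}$, not a constant in the exponent. The same holds for the factor $e^{s\log_3 s+O(s)}$ by which Proposition \ref{prop:sieve} falls short of $s^{-s}$.

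What you do need is the right partition. Use blocks $n\in(y^{\ell},y^{\ell+1}]$, $0\le \ell\le u_T$, rather than $[N,N^2]$; across the latter $s_n$ varies by $u_N$, and then one really does lose in the exponent. On the $\ell$-th block $s_n\ge u_T-\ell-1$. For $y\ge\log x$, Lemma \ref{lem:rankin} bounds the model mass $\prod_{p\le y}(1-1/p)\sum_{n>y^{\ell},\,P(n)\le y}1/n$ by $(\ell+1)^{-\ell}e^{O(\ell)}$. By convexity of $t\log t$, $\max_{\ell}(\ell+1)^{-\ell}(u_T-\ell)^{-(u_T-\ell)}\le(u_T/2)^{-u_T}$. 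Hence the sum over the $O(u_T)$ blocks is $u_T^{-u_T}e^{O(u_T\log_3 u_T)}\ll_{\theta,\varepsilon}u^{-\theta u(1-\varepsilon)}$ for $u$ large; bounded $u$ is trivial. This is exactly Corollary \ref{cor:rankin}, whose second case handles $y<\log x$ and produces the $x^{-\theta(1-\varepsilon)}$ term.

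Finally, your Step 1 remark that choosing $\sigma\asymp 1/\log\log x$ for bounded $u$ ``produces'' $x^{1/\log\log x}$ is spurious. Bounded $u$ needs no argument, and $x^{1/\log\log x}$ comes only from $2^{\omega(m)}\le x^{1/\log\log x}$ for $m\le x^{\theta}$.
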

Let $\Psi(x,y)$ be the number of integers $n \in [1,x]$ with $P(n)\le y$. Recall $\Psi(x,y)\ll_{\varepsilon} x u^{-u} + x^{\varepsilon}$ \cite[Ch.~III.5, Exercise 293(c)]{Ten}. Applying Corollary \ref{cor:giden} with $A=N_x$ and $\theta=1-\varepsilon$, and observing $\PP(d\mid A)-1/d \ll 1/x$, we obtain
\begin{equation}\label{eq:tenen2}
	\dtv( (\nu_p(N_x))_{p\le y}, (\Geom_p)_{p\le y})\ll_{\varepsilon}  u^{-u(1-\varepsilon)}+x^{-1+\varepsilon}
\end{equation}
for every $\varepsilon>0$, which is a slightly weaker version of Tenenbaum's optimal bound \eqref{eq:tenen}. Our results  allow one to replace $\varepsilon$ by an explicit quantity tending to $0$. A slightly more technical corollary is as follows.
\begin{cor}\label{cor:gphi}
	Let $A$ be a random variable taking values in $[1,x]\cap \NN$. Let $y\in [2,x]$. Fix $a\in \ZZ\setminus \{0\}$. Let $(\widetilde{G}_p)_{p\le y}$ be independent random variables defined as in \S\ref{sec:ford}.
	For every $\theta \in (0,1)$ and $\varepsilon>0$, we have
	\begin{multline*}
		\dtv( (\nu_p(A))_{p\le y}, (\widetilde{G}_p)_{p \le y}) \ll_{a,\varepsilon,\theta}u^{-\theta u(1-\varepsilon)}+x^{-\theta(1-\varepsilon)} \\
		+\sum_{\substack{d\le x^{\theta}\\ P(d)\le y\\ (d,a)=1}} \big|\PP(d\mid A)-\frac{1}{\phi(d)}\big|(2^{\omega(d)}+\log y)+  \sum_{\substack{d\le x^{\theta}\\ P(d)\le y\\ (d,a)>1}} \PP(d\mid A)(2^{\omega(d)}+\log y).
	\end{multline*}
\end{cor}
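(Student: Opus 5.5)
The plan is to derive Corollary \ref{cor:gphi} from the paper's main theorem (stated next in the paper) by specializing it to the multiplicative function
\[ g(d)=\frac{1}{\phi(d)}\ \text{if } (d,a)=1,\qquad g(d)=0\ \text{if } (d,a)>1 .\]
This is the same specialization that underlies Corollary \ref{cor:giden}, where $g(d)=1/d$.

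First we check that this $g$ produces the random variables $(\widetilde{G}_p)$. For $p\nmid a$ we have
\[ g(p^k)-g(p^{k+1})=\frac{1}{\phi(p^k)}-\frac{1}{\phi(p^{k+1})},\]
which is exactly $\PP(\widetilde{G}_p=k)$. For $p\mid a$ we have $g(1)-g(p)=1$, so the model variable is $\equiv 0$, as required.

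Next we verify the sieve-dimension hypothesis, namely that $g(p)\sim\kappa/p$ on average with $\kappa=1$. Mertens' theorem gives
\[ \sum_{p\le w}g(p)\log p=\log w+O_a(1),\]
since $1/(p-1)-1/p\ll 1/p^2$. The prime powers contribute a convergent total, $\sum_{p,k\ge2}g(p^k)\log p^k\ll 1$. The constants depend only on $a$, which is why the implied constant in the statement carries $a$ as a subscript.

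The main theorem presumably returns a bound of the shape
\[ u_T^{-u_T(1+o(1))}+\text{(small power of } x)+\sum_{d\le T,\ P(d)\le y}\bigl|\PP(d\mid A)-g(d)\bigr|\cdot w(d),\]
where the weight $w(d)$ arises from passing between $\nu_p$ and divisibility by prime powers. It is plausibly of size $\ll 2^{\omega(d)}+\log y$, coming from the Möbius inversion over squarefree parts and from the sum over $p\mid d$ of $\log p$-type weights. I would take $T=x^{\theta}$, so that $u_T=\theta u$. Then $u_T^{-u_T(1-\varepsilon')}\ll u^{-\theta u(1-\varepsilon)}$, after absorbing $\theta^{\theta u}$ into the $\varepsilon$-loss when $u$ is large; when $u$ is bounded the bound is trivial.

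Splitting the $d$-sum according to whether $(d,a)=1$ or $(d,a)>1$, and using $g(d)=0$ in the latter case, gives precisely the two sums in the statement.

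The main obstacle is matching the weight and error terms exactly. In Corollary \ref{cor:giden} the weight is $x^{1/\log\log x}$, which presumably bounds $2^{\omega(d)}$ crudely together with the divisor-type factors. Here, instead, we keep the weight $2^{\omega(d)}+\log y$ explicit. I will need to read off from the main theorem what weight multiplies $|\PP(d\mid A)-g(d)|$, and check that it is at most $2^{\omega(d)}+\log y$ up to $O_a(1)$. The same check must confirm that the remaining error term is $\ll x^{-\theta(1-\varepsilon)}$. That error plausibly comes from a term like $\sum_{d\le T,\,P(d)\le y}g(d)\cdot(\cdots)$, or from a tail over $d>T$ bounded via Rankin's trick with $\sum_d g(d)d^{\sigma}$. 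Here $g(d)\le C_a^{\omega(d)}/d$, so the Rankin estimates are the same as in the $1/d$ case up to constants depending on $a$.
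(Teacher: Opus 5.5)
Your overall route is the paper's. You specialize Theorem \ref{thm:KM} to $\Pa=\{p\le y\}$ and $g(d)=\mathbf{1}_{(d,a)=1}/\phi(d)$, check that $(G_p)$ is distributed like $(\widetilde G_p)$, split the level-of-distribution sum by $(d,a)$ using $g(d)=0$ when $(d,a)>1$, and bound the $A$-independent terms by Rankin-type estimates. However, you guessed the output of Theorem \ref{thm:KM} instead of reading it off, and two steps you commit to fail as written.

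First, the sum $D_2$ in Theorem \ref{thm:KM} runs over $d\le S$, where $S/T\ge y$ is forced; it does not run over $d\le T$. Your choice $T=x^{\theta}$ therefore brings in moduli up to $x^{\theta}y$, which is weaker than the statement. The paper takes $T=x^{\theta}/y$ and $S=Ty=x^{\theta}$. Then $u_T=\theta u-1$, which costs only a factor polynomial in $u$ and is absorbed by the $\varepsilon$-loss. The weight in $D_2$ is $2^{\omega(d)}+\mathbf{1}_{g(d)\neq 0}\prod_{p\le y}(1+g(p^{\nu_p(d)+1})/g(p^{\nu_p(d)}))$. The product is at most $\prod_{p\le y}(1+\frac{1}{p-1})\ll\log y$ by Mertens, so the $\log y$ comes from a product over all $p\le y$, not from the primes dividing $d$.

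Second, and more seriously, $D_1$ is not a black-box $u_T^{-u_T}$ term. It consists of a sum over $d>T$ and an $f(u_d)$-weighted sum over $d\le T$, each with $g(d)$ multiplied by $\prod_{p\le y}(1-g(p^{\nu_p(d)+1})/g(p^{\nu_p(d)}))\ll_a 1/\log y$, uniformly in $d$. So you need $\sum_{d>x^{\theta}/y,\,P(d)\le y}1/\phi(d)$ and $\sum_{d\le x^{\theta}/y,\,P(d)\le y}f(u_d)/\phi(d)$ to be $\ll\log y\,(u^{-\theta u(1-\varepsilon)}+x^{-\theta(1-\varepsilon)})$. That is, the Euler product must be of size exactly $\log y$.

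Your bound $g(d)\le C_a^{\omega(d)}/d$ destroys this. Some $C>1$ is unavoidable there, since $d/\phi(d)>1$, and any fixed $C>1$ raises the dimension to $C$. Rankin's trick then picks up an extra factor of roughly $\exp((C-1)\sum_{p\le y}p^{-\alpha})\approx(\log y)^{C-1}e^{O(u)}$. This cannot be absorbed: when $u\to\infty$ slowly, $\log y$ is far larger than $u^{\varepsilon\theta u}$.

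The fix, as in the paper, is to apply Lemma \ref{lem:rankin} and Corollary \ref{cor:rankin} with $h(d)=d/\phi(d)$. Here $h(p)=p/(p-1)$ satisfies \eqref{eq:mertensk} with $\kappa=1$ and $h(p^i)\le 2$, so $\prod_{p\le y}\sum_i h(p^i)p^{-i}\asymp\log y$ exactly cancels the $1/\log y$. The $f(u_d)$-weighted sum over $d\le T$ needs Corollary \ref{cor:rankin}, not just a tail bound. The term $x^{-\theta(1-\varepsilon)}$ arises from the range $y<\log x$ in these two results.
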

We apply Corollary \ref{cor:gphi} with $A=P_{x,a}+a$, where $P_{x,a}$ is a prime chosen uniformly at random from the primes in $(|a|+1,x]$. If $a>0$ we must apply the corollary with $x+a$ instead of $x$ because $A$ can be as large as $x+a$. 	By definition,
\[\PP(d\mid P_{x,a}+a) = \frac{\pi(x;d,-a)-\pi(|a|+1;d,-a)}{\pi(x)-\pi(|a|+1)}= \frac{\pi(x;d,-a)}{\pi(x)}+O_a\left(\frac{1}{\pi(x)}\right).\]
Since $\pi(x;d,-a)\le 1$ if $(d,a)>1$, we find that
	\[	\dtv( (\nu_p(P_{x,a}+a))_{p\le y}, (\widetilde{G}_p)_{p \le y}) \ll_{a,\varepsilon,\theta}u^{-\theta u(1-\varepsilon)}+x^{-\theta(1-\varepsilon)} +\sum_{\substack{d\le (x+|a|)^{\theta}\\ P(d)\le y\\ (d,a)=1}} \bigg|\frac{\pi(x;d,-a)}{\pi(x)}-\frac{1}{\phi(d)}\bigg|(2^{\omega(d)}+\log y)\]
	holds by invoking  $\Psi(x,y)\ll_{\varepsilon} xu^{-u} +x^{\varepsilon}$ again to treat the contribution of $O_a(1/\pi(x))$ and $(d,a)>1$. To recover Ford's result on shifted primes, namely the bound \eqref{eq:ford}, we apply the Cauchy--Schwarz inequality and the Brun--Titchmarsh theorem to conclude
		\[ \sum_{\substack{d\le (x+|a|)^{\theta}\\P(d)\le y\\(d,a)=1}} \left|\frac{\pi(x;d,-a)}{\pi(x)} - \frac{1}{\phi(d)}\right|(\log y+ 2^{\omega(d)}) \ll_{a,\theta}(\log y)^2 \bigg(\sum_{\substack{d\le (x+|a|)^{\theta}\\P(d)\le y\\(d,a)=1}} \left|\frac{\pi(x;d,-a)}{\pi(x)} - \frac{1}{\phi(d)}\right|\bigg)^{1/2},\]
		which can be estimated by Ford's assumption \eqref{eq:Hz}. Next we state our full theorem.    Recall that $\langle \Pa \rangle$ is the semigroup generated by $\Pa$.
\begin{thm}\label{thm:KM}
	Consider the following input:
	\begin{itemize}
		\item Let $A$ be a random variable  taking values in $[1,x]\cap \NN$. 
		\item Let $y\in [2,x]$.
			\item Let $\Pa$ be a subset of the primes up to $y$.
		\item Let $g\colon \NN\to [0,1]$ be a multiplicative function with $g(d)=0$ for all $d\not\in \langle \Pa\rangle$. Suppose that $g(p^{k+1})\le g(p^k)$	for all $p\in \Pa$ and $k\in \NN_{\ge 0}$, and that $\lim_{k\to \infty}g(p^k)=0$ for all $p\in \Pa$. Suppose that there exist $\kappa\ge 0$ and $B>0$ such that the following holds: for every $d\in \langle\Pa\rangle$ such that $g(p^{\nu_p(d)+1})<g(p^{\nu_p(d)})$ for all $p \in \Pa$, we have
		\begin{equation}\label{eq:iwaniecd}
			\prod_{a\le p \le b} \left( 1-\frac{g(pd)}{g(d)}\right)^{-1} \le \left( \frac{\log b}{\log a}\right)^{\kappa}\exp\left( \frac{B}{\log a}\right)
		\end{equation}
	for all $2\le a \le b\le y$.
		\item Let $(G_p)_{p\in \Pa}$ be independent random variables taking values in $\NN_{\ge 0}$ with $\PP(G_p=k)=g(p^k)-g(p^{k+1})$ for all $k\in \NN_{\ge 0}$.
	\end{itemize}
	Then, 	for any parameters $S>T>0$ with $S/T\ge y$, we have
	\[	\dtv( (\nu_p(A))_{p\in\Pa}, (G_p)_{p\in \Pa}) \ll_{\kappa,B} D_1+D_2\]
	for
	\begin{align*}
		D_1 &=\sum_{\substack{ d>T\\ g(d)\neq 0}} g(d)\prod_{p\in \Pa}(1-g(p^{\nu_p(d)+1})/g(p^{\nu_p(d)}))+\sum_{\substack{ d \le T\\ g(d)\neq 0}} g(d) \prod_{p\in \Pa}(1-g(p^{\nu_p(d)+1})/g(p^{\nu_p(d)})) f(u_d),\\
		D_2 &=  \sum_{\substack{d\le S\\ d \in \langle \Pa\rangle }} |\PP(d\mid A)-g(d)|(2^{\omega(d)}+\mathbf{1}_{g(d)\neq 0}\prod_{p\in \Pa}(1+g(p^{\nu_p(d)+1})/g(p^{\nu_p(d)}))),
	\end{align*}
	where $u_d$ is given by $y^{u_d} := S/d$, and $f(t):=e^{-t\log t+t\log_3 \max\{t,100\}+C_{\kappa,B}t}$ for a sufficiently large constant  $C_{\kappa,B}$. Here $\log_3 t=\log \log \log t$.
\end{thm}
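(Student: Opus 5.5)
Let $X=(\nu_p(A))_{p\in\Pa}$ and $Y=(G_p)_{p\in\Pa}$, and identify each tuple with the integer $n=\prod_{p\in\Pa}p^{\nu_p}\in\langle\Pa\rangle$, so that $X$ corresponds to the $\Pa$-part $A_{\Pa}$ of $A$. By \eqref{eq:dtv1} it suffices to bound $\sum_{n\in\langle\Pa\rangle}|\PP(A_\Pa=n)-\PP(Y=n)|$. By independence, $\PP(Y=n)=\prod_{p\in\Pa}(g(p^{\nu_p(n)})-g(p^{\nu_p(n)+1}))=g(n)\prod_{p\in\Pa}(1-g(p^{\nu_p(n)+1})/g(p^{\nu_p(n)}))$ whenever $g(n)\neq 0$; this is exactly the weight in $D_1$. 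I split the $n$'s into large ($n>T$) and small ($n\le T$) ones.

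For large $n$ I use the one-sided trick: since both are probability measures, $\sum_n|\PP(A_\Pa=n)-\PP(Y=n)|=2\sum_n(\PP(Y=n)-\PP(A_\Pa=n))^+$. Hence it is enough to show that for each $n\le T$ we have $\PP(A_\Pa=n)\ge \PP(Y=n)-(\text{error}_n)$, and that the total error is acceptable. The large $n$ contribute at most $\sum_{n>T}\PP(Y=n)$, which is the first sum in $D_1$.

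For fixed $n\le T$, the event $A_\Pa=n$ is $\{n\mid A\}\cap\{p^{\nu_p(n)+1}\nmid A\ \forall p\in\Pa\}$. By inclusion–exclusion, $\PP(A_\Pa=n)=\sum_{e}\mu(e)\PP(n\,e^*\mid A)$, where $e$ runs over squarefree elements of $\langle\Pa\rangle$ and $e^*=\prod_{p\mid e}p$ raises each exponent by one. I would replace this exact sieve by a lower-bound sieve of level $S/n=y^{u_n}$ for the "dimension $\kappa$" density $h_n(p)=g(p^{\nu_p(n)+1})/g(p^{\nu_p(n)})$. Hypothesis \eqref{eq:iwaniecd} is precisely the Iwaniec-type condition on $h_n$ needed for this; primes with $h_n(p)=1$ would force $\PP(Y=n)=0$, which is why the condition is imposed only there. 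Concretely, I would take the fundamental lemma used by Ford, namely the Iwaniec/Friedlander–Iwaniec $\beta$-sieve with sifting range $y$ and level $D=S/n$, giving $s=u_n$. Its lower-bound weights $\lambda^-_e$ (supported on $e\le D$, $|\lambda^-_e|\le 1$, $\lambda^-\le\mu*1$ pointwise) satisfy $\sum_e\lambda^-_e h_n(e)\ge \prod_p(1-h_n(p))(1-O(f(u_n)))$ with $f(s)=e^{-s\log s+s\log_3 s+Cs}$. This is the sharp $s^{-s}$-type decay; the $\log_3$ loss is what Ford's version delivers.

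Positivity of $\lambda^-$ then gives $\PP(A_\Pa=n)\ge\sum_e\lambda^-_e\PP(ne^*\mid A)$. Writing $\PP(ne^*\mid A)=g(ne^*)+(\PP(ne^*\mid A)-g(ne^*))$ and using $g(ne^*)=g(n)h_n(e)$, the main term is $\ge\PP(Y=n)(1-O(f(u_n)))$, which produces the second sum in $D_1$. The error is at most $\sum_{n\le T}\sum_{e\le S/n}|\PP(ne^*\mid A)-g(ne^*)|$. Setting $d=ne^*\le S$, I regroup by $d$: the number of pairs $(n,e)$ with $ne^*=d$ is at most the number of squarefree $e\mid d$, which is $2^{\omega(d)}$, giving the first factor in $D_2$. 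Since $ne^*\le n\cdot S/n$ up to the factor $e^*/e$, I will make the sieve level $S/n$ in terms of $e^*$ rather than $e$, and the requirement $S/T\ge y$ guarantees $u_n\ge1$. The extra term $\mathbf 1_{g(d)\ne0}\prod_p(1+h(p))$ in $D_2$ should arise when converting main terms involving $\PP(ne^*\mid A)$ back and forth, or when bounding contributions with $g(n)=0$, by a weighted count over divisors.

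The main obstacle is establishing the fundamental lemma with the sharp $f(u)$, uniformly in $n$. The density $h_n$ depends on $n$, so I must check that hypothesis \eqref{eq:iwaniecd} yields constants depending only on $\kappa,B$, and that the lower-bound sieve in the range where $u$ is small (where $f$ is large and the bound is trivial) is handled separately. I would cite Ford's form of the $\beta$-sieve fundamental lemma rather than reprove it.
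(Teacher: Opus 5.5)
Your argument is correct, and its skeleton is the paper's. Both use the one-sided formula \eqref{eq:dtvmax}, the trivial bound $\PP(Y=n)$ for $n>T$, and the fundamental lemma (Proposition~\ref{prop:sieve}) for the density $h_n(p)=g(p^{\nu_p(n)+1})/g(p^{\nu_p(n)})$ at level $S/n=y^{u_n}$. Both then regroup the remainders by $m=ne$, with at most $2^{\omega(m)}$ representations.

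Where you differ is in how the sieve is applied, and your version is leaner. The paper factors $\PP(A_\Pa=n)=\PP(n\mid A)\,\PP(\gcd(A_n,\prod_{p\in\Pa}p)=1)$, with $A_n$ the conditioned quotient. It applies the two-sided Corollary~\ref{cor:flprob} to $A_n$, and needs the upper-bound Lemma~\ref{lem:upperprob} when $u_n$ is small. It must then compare $\PP(n\mid A)$ with $g(n)$, which creates the terms \eqref{eq:2sums}, $E_{1,2}$ and $E_{2,2}$. It is $E_{2,2}$, i.e.\ $\sum_e \mu^2(e)\,(g(ne)/g(n))\,|\PP(n\mid A)-g(n)|$, that produces the weight $\prod_{p\in\Pa}(1+h_n(p))$ in $D_2$.

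You instead sieve the unconditioned sequence $a_m=\PP(A=nm)$ with main term $X=g(n)$ (Corollary~\ref{cor:fl} with $X=g(n)$). The remainders are then directly $\PP(ne\mid A)-g(ne)$, and you only need the lower-bound half. For $u_n<100$ the trivial bound $\PP(A_\Pa=n)\ge 0$ suffices, because $f(u_n)\ge 1$ there once $C_{\kappa,B}$ is large.

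So your route proves the theorem with $D_2$ replaced by the smaller quantity $\sum_{d\le S,\, d\in\langle\Pa\rangle}2^{\omega(d)}|\PP(d\mid A)-g(d)|$, and without Lemma~\ref{lem:upperprob}. Your speculation about where the extra $\prod(1+h)$ term ``should arise'' is therefore unnecessary: it never appears in your argument, and the stated bound follows a fortiori. In Corollary~\ref{cor:gphi} this would remove the $\log y$ weight. What the paper's route buys is modularity: a single probabilistic sieve statement, \eqref{eq:flst}, applied to an arbitrary random variable $R$.

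Three small corrections. First, since $e$ is squarefree, $e^*=e$, so your remark about setting the level ``in terms of $e^*$'' is moot: $e\le S/n$ gives $ne\le S$ directly. Second, the fundamental lemma used by Ford and by the paper is the Brun--Hooley form from Ford's notes (Proposition~\ref{prop:sieve}), not the $\beta$-sieve. Its weights satisfy $|\lambda^-|\le 1$, are supported on squarefree $e\le D$ with $P(e)\le y$, and depend only on $(y,D)$. Its error $f(s)$ is uniform over all densities satisfying \eqref{eq:iwaniecd}, so the uniformity in $n$ that you flag as the main obstacle is automatic. Third, ``$\lambda^-\le\mu*1$'' should read $1*\lambda^-\le 1*\mu$.
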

The error term $D_2$ encodes information on the level of distribution of the random variable $A$.
The error term $D_1$ does not depend on $A$ at all; the dependence on $A$ comes into play only in $D_2$.

In \S\ref{sec:rankin} we discuss how one bounds $D_1$. For reasonable functions $g$ we show that $D_1\approx u_T^{-u_T}$, so $D_1$ decreases in $T$. Clearly, $D_2$ increases in $S$. In view of the restriction $S\ge Ty$, we shall usually take $S=Ty$.
\subsection{Bernoulli variant}
From Theorem \ref{thm:KM} one can deduce the following usable variant, where $(B_p)_{p \in \mathcal{P}}$ is defined as in Theorem \ref{thm:Kub}.
\begin{cor}\label{cor:KM}
	Let $A$ be a random variable  taking values in $[1,x]\cap \NN$. Let $y\in [2,x]$. Let $\Pa$ be a subset of the primes up to $y$. Let $g\colon \NN\to [0,1]$ be a multiplicative function supported on the squarefree elements in $\langle \Pa \rangle$, and such that $g(p)<1$ for all $p\in \Pa$. Suppose that there exist $\kappa\ge 0$ and $B>0$ such that
	\begin{equation}\label{eq:iwaniecdd}
		\prod_{a\le p \le b} \left( 1-g(p)\right)^{-1} \le \left( \frac{\log b}{\log a}\right)^{\kappa}\exp\left( \frac{B}{\log a}\right)
	\end{equation}
	holds for all $2\le a \le b\le y$. Then, 	for any parameter $T>0$, we have
	\[	\dtv( (\mathbf{1}_{p\mid A})_{p\in\Pa}, (B_p)_{p\in \Pa}) \ll_{\kappa,B} D'_1+D'_2\]
	for
	\begin{align*}
		D'_1 &=\prod_{p\in \Pa}(1-g(p))\bigg(\sum_{d>T}\frac{g(d)}{\prod_{p\mid d}(1-g(p))}+\sum_{ d \le T} \frac{g(d)}{\prod_{p\mid d}(1-g(p))}f(u_d)\bigg),\\
		D'_2 &=  \sum_{\substack{d\le Ty\\ d \in \langle \Pa\rangle \\ \mu^2(d)=1}} |\PP(d\mid A)-g(d)|(2^{\omega(d)}+\prod_{p\in \Pa}(1+g(p))),
	\end{align*}
	where $u_d$ is given by $y^{u_d} := Ty/d$, and $f(t):=e^{-t\log t+t\log_3 \max\{t,100\}+C_{\kappa,B}t}$ for a sufficiently large   $C_{\kappa,B}$.
\end{cor}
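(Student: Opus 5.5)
The plan is to derive Corollary \ref{cor:KM} from Theorem \ref{thm:KM} by reducing the Bernoulli setting to the geometric-type setting. We replace $A$ by a suitable random variable whose $p$-adic valuations only record divisibility. Given $A$, consider the random vector $(\mathbf{1}_{p\mid A})_{p\in\Pa}$, and set $A':=\prod_{p\in \Pa,\, p\mid A} p$, the $\Pa$-part of the radical of $A$. Then $A'$ takes values in $[1,x]\cap\NN$, and $(\nu_p(A'))_{p\in\Pa}=(\mathbf{1}_{p\mid A})_{p\in\Pa}$. Moreover, for squarefree $d\in\langle\Pa\rangle$ we have $\PP(d\mid A')=\PP(d\mid A)$, while $\PP(d\mid A')=0$ for non-squarefree $d$.

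We apply Theorem \ref{thm:KM} to $A'$ with the given squarefree-supported $g$.

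\textbf{Hypotheses.} The monotonicity $g(p^{k+1})\le g(p^k)$ holds because $g(p^k)=0$ for $k\ge2$ and $g(p)\le 1=g(1)$. The limit condition is trivial.

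\textbf{The variables $G_p$.} We have $\PP(G_p=0)=1-g(p)$, $\PP(G_p=1)=g(p)-g(p^2)=g(p)$, and $G_p\in\{0,1\}$. Hence $G_p$ has the law of $B_p$.

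\textbf{Condition \eqref{eq:iwaniecd}.} Take $d\in\langle\Pa\rangle$ with $g(p^{\nu_p(d)+1})<g(p^{\nu_p(d)})$ for all $p\in\Pa$. This forces $g(d)\ne0$, so $d$ is squarefree. For $p\nmid d$ we get $g(pd)/g(d)=g(p)$, and for $p\mid d$ we get $g(pd)=0$. So the product in \eqref{eq:iwaniecd} is $\prod_{a\le p\le b,\,p\nmid d}(1-g(p))^{-1}$. Each factor is at least $1$, so this is at most the full product, which \eqref{eq:iwaniecdd} bounds. (Primes not in $\Pa$ have $g(p)=0$ and contribute $1$.)

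\textbf{Choice of $S$.} We take $S=Ty$, so that $S/T=y$ and $u_d$ agrees with the definition in the corollary.

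It remains to evaluate $D_1$ and $D_2$. For $g(d)\ne0$, so that $d$ is squarefree in $\langle\Pa\rangle$, the factor $1-g(p^{\nu_p(d)+1})/g(p^{\nu_p(d)})$ equals $1-g(p)$ when $p\nmid d$ and equals $1$ when $p\mid d$. Hence
\[\prod_{p\in\Pa}\bigl(1-g(p^{\nu_p(d)+1})/g(p^{\nu_p(d)})\bigr)=\prod_{p\in\Pa}(1-g(p))\Big/\prod_{p\mid d}(1-g(p)),\]
and $D_1=D_1'$ exactly.

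In $D_2$, the corresponding product is $\prod_{p\in\Pa,\,p\nmid d}(1+g(p))\le\prod_{p\in\Pa}(1+g(p))$. Now split the sum in $D_2$ according to whether $d$ is squarefree. For squarefree $d$ we obtain at most the summand of $D_2'$; here $\PP(d\mid A')=\PP(d\mid A)$, and when $g(d)=0$ we simply drop the nonnegative product term. For non-squarefree $d$ both $\PP(d\mid A')$ and $g(d)$ vanish, so these terms contribute nothing. Thus $D_2\le D_2'$, and the corollary follows.

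The only potential obstacle is the hypothesis $T>0$ with $S>T$, which holds since $y\ge2$. The whole argument is routine bookkeeping. The key point is the passage from $A$ to $A'$, which kills all non-squarefree terms in $D_2$ and aligns the valuations with the indicators.
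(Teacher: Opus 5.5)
Your proof is correct and follows the paper's own route: apply Theorem~\ref{thm:KM} to the radical of $A$ with $S=Ty$. You use the $\Pa$-part of the radical where the paper uses $\prod_{p\mid A}p$, which makes no difference, and your checks that $G_p\sim B_p$, that \eqref{eq:iwaniecd} reduces to \eqref{eq:iwaniecdd}, that $D_1=D'_1$ and that $D_2\le D'_2$ are accurate (the paper leaves these implicit).
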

\begin{proof}
	Apply Theorem \ref{thm:KM} to the random variable $A'= \prod_{p \mid A}p$. Then take $S=Ty$.
\end{proof}
Via Corollary \ref{cor:KM} we shall improve the estimates of U\v{z}davinis and Barban discussed in the introduction.
\begin{cor}\label{cor:bern}
Fix $\varepsilon>0$ and $B>0$. In the notation of \S\ref{sec:ellgen}, the following estimates hold:
\begin{align}\label{eq:Uzdavinisimp}
	\dtv( (\mathbf{1}_{p\mid h(N_x)})_{p_0<p\le y}, (W_p)_{p_0<p\le y})&\ll_{h,\varepsilon}  u^{-u(1-\varepsilon)}+x^{-1+\varepsilon},\\
\label{eq:Barbanimp}
	\dtv( (\mathbf{1}_{p\mid h(P_x)})_{p_0<p\le y}, (\widetilde{W}_p)_{p_0<p\le y})&\ll_{B,h,\varepsilon} u^{-u(1/2-\varepsilon)}+(\log x)^{-B}.
\end{align}
\end{cor}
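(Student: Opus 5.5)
We apply Corollary \ref{cor:KM} twice, with $\Pa$ the primes in $(p_0,y]$. For \eqref{eq:Uzdavinisimp} we take $A=h(N_x)$, after shifting so that $A$ is a positive integer of size at most $x^{\deg h}$ (the finitely many $n$ with $h(n)\le 0$ are discarded at cost $O_h(1/x)$). We take $g(d)=\mu^2(d)\rho(d)/d$ on squarefree $d\in\langle\Pa\rangle$, where $\rho$ is multiplicative by CRT. For \eqref{eq:Barbanimp} we take $A=h(P_x)$ and $g(d)=\mu^2(d)\eta(d)/\phi(d)$.

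The plan has three steps.

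\textbf{Step 1: checking \eqref{eq:iwaniecdd}.} Since $\rho(p),\eta(p)\le \deg h$ and $g(p)\le 1-c_h$ for $p>p_0$, we have $-\log(1-g(p))=g(p)+O_h(1/p^2)$. The prime ideal theorem for the splitting field of $h$ gives $\sum_{a\le p\le b}\rho(p)/p=\kappa\log(\log b/\log a)+O_h(1/\log a)$, where $\kappa$ is the number of irreducible factors of $h$. This yields \eqref{eq:iwaniecdd}, and the analogous estimate holds for $\eta(p)/(p-1)$.

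\textbf{Step 2: bounding $D_1'$ by Rankin's trick.} The factor $\prod_{p\in\Pa}(1-g(p))$ is $\asymp(\log y)^{-\kappa}$. In each of the two sums we insert weights: $(d/T)^{\sigma}$ with $\sigma\approx 1/\log y$ times something, for $d>T$, and a comparison of $f(u_d)$ with $d^{\sigma}$ for $d\le T$. We then bound $\sum_d g(d)d^{\sigma}/\prod_{p\mid d}(1-g(p))\le\prod_{p\le y}(1+g(p)p^\sigma/(1-g(p)))$. Choosing $\sigma=\log(u\log u)/\log y$ in the usual way, this gives $D_1'\ll_{h,\varepsilon}u_T^{-u_T(1-\varepsilon/2)}$, where $u_T=\log T/\log y$. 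The $d\le T$ part is handled the same way, using $f(u_d)\le e^{-u_d\log u_d+O(u_d\log_3 u_d)}$ and splitting according to the size of $d$. This is the main obstacle: matching the $u_d^{-u_d}$ decay against the growth of $\sum_{d\approx y^{u_T-u_d}}g(d)$ so that the convolution still yields $u_T^{-u_T(1-\varepsilon)}$. I expect this to follow from the same Rankin weight, since $f(u_d)d^{-\sigma}$ is maximized near $d\approx T$. If $u$ is bounded, the bound is trivial.

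\textbf{Step 3: bounding $D_2'$.} Here $\prod_{p\le y}(1+g(p))\ll(\log y)^{\kappa}$, and $2^{\omega(d)}\ll x^{o(1)}$.

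For $N_x$, $\PP(d\mid h(N_x))=\rho(d)/d+O(\rho(d)/x)$. Take $T=x^{1-\varepsilon}/y$, so $Ty\le x^{1-\varepsilon}$. Then $D_2'\ll x^{-1+o(1)}\sum_{d\le x^{1-\varepsilon}}\rho(d)\ll x^{-\varepsilon+o(1)}$. To reach $x^{-1+\varepsilon}$ instead, restrict to $d$ with $P(d)\le y$ and use $\Psi(Ty,y)\ll Ty\,u^{-u}+x^{\varepsilon}$ with $T$ chosen as $x^{1-\varepsilon}/y$. The resulting main error $Ty\cdot x^{-1}u_{Ty}^{-u_{Ty}}$ is dominated by $u^{-u(1-\varepsilon)}$, and the remaining $x^{\varepsilon}/x$ term gives $x^{-1+\varepsilon}$. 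Since $u_T=u(1-\varepsilon)-1$, we get $u_T^{-u_T}\ll u^{-u(1-2\varepsilon)}$.

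For primes, $\PP(d\mid h(P_x))=\sum_{r \bmod d,\,h(r)\equiv0}\pi(x;d,r)/\pi(x)$ for squarefree $d$ coprime to the finitely many bad primes. We take $Ty=x^{1/2-\varepsilon}$. The weighted sum of $|\PP(d\mid h(P_x))-g(d)|$ is controlled by Cauchy--Schwarz against Brun--Titchmarsh, which bounds $\sum\rho(d)^2 4^{\omega(d)}/\phi(d)$ by a power of $\log x$. This is combined with Bombieri--Vinogradov, which gives $(\log x)^{-2B-C}$ for $\sum_{d\le x^{1/2-\varepsilon}}\eta(d)\max_r|\pi(x;d,r)-\pi(x)/\phi(d)|$. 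The result is $D_2'\ll(\log x)^{-B}$, and $D_1'\ll u^{-u(1/2-\varepsilon)}$ since $u_T\approx u(1/2-\varepsilon)$.
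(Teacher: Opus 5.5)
Your proposal follows essentially the paper's route. You apply Corollary \ref{cor:KM} with $g(d)=\mu^2(d)\rho(d)/d$ (resp.\ $\mu^2(d)\eta(d)/\phi(d)$), bound $D_1'$ by Rankin's trick (packaged in the paper as Lemma \ref{lem:rankin}, Corollary \ref{cor:rankin} and Corollary \ref{cor:KMspec}), bound $D_2'$ via $\Psi(x,y)\ll_{\varepsilon} xu^{-u}+x^{\varepsilon}$ with $T\approx x^{1-\varepsilon}$, and use Bombieri--Vinogradov with $T\approx x^{1/2-\varepsilon}$ for primes.

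One correction to Step 2. With $\sigma=\log(u_T\log u_T)/\log y$, the quantity $f(u_d)d^{-\sigma}$ is maximized at $d=1$, where it equals $f(u_T+1)\approx u_T^{-u_T}$. It is not maximized near $d=T$, where it is only about $(u_T\log u_T)^{-u_T}$. Combining this maximum with $\prod_{p\in\Pa}\bigl(1+g(p)p^{\sigma}/(1-g(p))\bigr)\le e^{O(u_T)}\prod_{p\in\Pa}(1-g(p))^{-1}$ still gives $u_T^{-u_T}e^{O(u_T\log_3 u_T)}$ for the $d\le T$ sum. So your single-weight argument works in place of the paper's splitting into blocks $(y^{\ell},y^{\ell+1}]$.
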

\subsection{Remark on the proof of Theorem \ref{thm:KM}}
Ford's starting point for proving \eqref{eq:ford} is the formula \eqref{eq:dtv1} for the total variation distance of two discrete random variables $X$ and $Y$. If one wants to bound the contribution of $\omega \in \Omega_{\mathrm{bad}}$ to \eqref{eq:dtv1} for some subset $\Omega_{\mathrm{bad}}\subseteq \Omega$, the triangle inequality bounds this trivially by
\[ \frac{1}{2}(\PP(X \in \Omega_{\mathrm{bad}}) +\PP(Y \in \Omega_{\mathrm{bad}})),\]
but in many situations $\PP(X \in \Omega_{\mathrm{bad}})$  is easy to handle, while $\PP(Y \in \Omega_{\mathrm{bad}})$ is difficult to bound. We avoid \eqref{eq:dtv1}, and instead use the following (also well-known) formula \cite[Eq.~(3.7)]{ABT}:
\begin{equation}\label{eq:dtvmax}
	\dtv(X,Y)= \sum_{\omega\in \Omega} \max\left\{\PP(X=\omega)-\PP(Y=\omega),0\right\}.
\end{equation}
Identity \eqref{eq:dtvmax} follows from the observation that $|\PP(X\in A)-\PP(Y\in A)|$ is maximized when $A=\{ \omega\in \Omega: \PP(X=\omega)\ge \PP(Y=\omega)\}$. Formula \eqref{eq:dtvmax} has the advantage that $\omega\in \Omega_{\mathrm{bad}}$ contributes to it at most
\[ \PP(X \in \Omega_{\mathrm{bad}}).\]
In particular, this leads to our proof of Ford's bound requiring less arithmetic input (Ford needed bounds on the probability that $P_{x,a}+a$ is $y$-smooth while we do not).
The observation that \eqref{eq:dtvmax} is superior to \eqref{eq:dtv1} in the context of the Kubilius model is implicit in Elliott \cite[pp.~123--126]{Elliott1}. The first explicit use of \eqref{eq:dtvmax} in the context of the Kubilius model seems to be Ford's lecture notes \cite[Ch.~4]{FordNotesAnatomy}, where \eqref{eq:dtvmax} plays a crucial role.
\section{Sieve theory facts}
Before proving Theorem \ref{thm:KM}, we review classical results from sieve theory that we will require. The following proposition is a form of the fundamental lemma of sieve theory. We quote it as stated in Ford's lecture notes \cite{FordNotes}. Its proof in the notes is based on Hooley's `almost pure sieve' \cite{Hooley}, a powerful variant of Brun's pure sieve, as simplified and presented by Ford and Halberstam \cite{FordHalberstam}. Ford and Halberstam refer to this sieve as the Brun--Hooley sieve.

A function $\lambda\colon \NN\to \RR$ is called an \textit{upper bound} (resp.~\textit{lower bound}) sieve if $\lambda(1)=1$ and $\sum_{d\mid n}\lambda(d)\ge \delta_{n,1}$ (resp.~$\sum_{d\mid n}\lambda(d)\le \delta_{n,1}$) for all $n\in \NN$. Here $\delta_{n,1}$ is $1$ if $n=1$, and is $0$ otherwise.

\begin{proposition}\cite[Theorem 3.6(a)]{FordNotes}\label{prop:sieve}
Given integers $z,D$ with $2\le z\le D^{1/2}$, let $s=\tfrac{\log D}{\log z}$ and assume $s\ge 100$. There exist upper bound and lower bound sieves $\lambda^+,\lambda^-\colon \NN\to \RR$ satisfying $|\lambda^+(n)|, |\lambda^-(n)|\le 1$ for all $n\in \NN$, vanishing outside the finite set $\{1 \le d \le D:\, \mu^2(d)=1,\, P(d)\le z\}$, such that the following holds. Given $\kappa\ge 0$ and $B>0$ there is a constant $C_{\kappa,B}>0$ with the following property.  If $g\colon \NN\to [0,1]$ is a multiplicative function satisfying $g(p)<1$ for all $p\le z$, and
\begin{equation}\label{eq:iwanieccond}
\prod_{y\le p\le w}(1-g(p))^{-1} \le \left( \frac{\log w}{\log y}\right)^{\kappa} \exp\left( \frac{B}{\log y}\right)
\end{equation}
for all $2\le y \le w \le z$, then
\[ \sum_{d\ge 1} \lambda^{\pm}(d) g(d)=\left(1+O\left(e^{-s\log s+s\log_3 s+C_{\kappa,B}s}\right)\right)\prod_{p\le z}(1-g(p)).\]
\end{proposition}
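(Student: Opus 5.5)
The plan is to build $\lambda^{\pm}$ by Hooley's almost pure sieve, in the Ford--Halberstam form, which cuts the primes $p\le z$ into blocks and applies a truncated Brun pure sieve separately on each block. Concretely, I would choose thresholds $z=z_0>z_1>\dots>z_r\ge 2$ and let $\Pa_j$ be the set of primes in $(z_j,z_{j-1}]$, with $\Pa_{r+1}$ the primes $\le z_r$. For each block I would fix a truncation level $2b_j$, an even integer, and set
\[\chi_j^{+}(n)=\sum_{\substack{d\mid (n,\prod_{p\in\Pa_j}p)\\ \omega(d)\le 2b_j}}\mu(d),\qquad \chi_j^{-}(n)=\sum_{\substack{d\mid (n,\prod_{p\in\Pa_j}p)\\ \omega(d)\le 2b_j-1}}\mu(d).\]
By the Bonferroni inequalities, $\chi_j^-\le \mathbf{1}_{(n,\Pa_j)=1}\le\chi_j^+$ and $\chi_j^+\ge 0$.

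The upper sieve is $\prod_j\chi_j^+$. For the lower sieve I would use Hooley's inequality, valid for $0\le a_j\le b_j$ with $a_j\le 1$:
\[\prod_j a_j\ge \prod_j b_j-\sum_k (b_k-a_k)\prod_{j\ne k}b_j .\]
Here I take $a_j=\mathbf{1}_{(n,\Pa_j)=1}$ and $b_j=\chi_j^+$. I then replace $b_k-a_k$ by the larger quantity $\chi_k^+-\chi_k^-$, which consists of the single layer $\omega(d)=2b_k$. Expanding these products defines $\lambda^{\pm}(d)$. Every $d$ in the support is squarefree with $P(d)\le z$ and $d\le\prod_j z_{j-1}^{2b_j}$. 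The parameters must be chosen so that this product is at most $D=z^s$. I would also check $|\lambda^\pm|\le1$ directly: each squarefree $d$ factors uniquely into its block components, and the correction term contributes only through one layer.

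Next I would evaluate $\sum_d\lambda^\pm(d)g(d)$. By multiplicativity the sum factors over the blocks. The exact sifting density $\prod_{p\in\Pa_j}(1-g(p))$ differs from the truncated sum by at most the first omitted Bonferroni term, $E_j:=\bigl(\sum_{p\in\Pa_j}g(p)\bigr)^{2b_j+1}/(2b_j+1)!$. Hence the upper sum equals $\prod_{p\le z}(1-g(p))\cdot\prod_j\bigl(1+O(E_jV_j^{-1})\bigr)$, where $V_j=\prod_{\Pa_j}(1-g(p))$. The lower-sieve correction terms are of the same shape. Hypothesis \eqref{eq:iwanieccond} gives two bounds:
\[\sum_{p\in\Pa_j}g(p)\le\log\prod_{\Pa_j}(1-g(p))^{-1}\le \kappa\log\frac{\log z_{j-1}}{\log z_j}+\frac{B}{\log z_j},\]
and $V_j^{-1}$ is bounded by the exponential of the same quantity. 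The total relative error is therefore $\ll\sum_j\bigl(\kappa\log(\cdot)+B/\log z_j\bigr)^{2b_j+1}/(2b_j+1)!$, up to a factor $e^{O_{\kappa,B}(1)}$.

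The main obstacle is the choice of parameters, which must achieve the sharp rate $e^{-s\log s+s\log_3 s+C s}$ rather than the weaker $e^{-cs\log s}$. I would take the first block short, $z_1=z^{1-1/\log s}$ or similar, with a large level $2b_1\approx s$. Then $\sum_{\Pa_1}g\approx\kappa/\log s$, and $E_1\approx(\kappa/\log s)^{s}/s!\approx e^{-s\log s+s\log_2 s\,\cdot(\ldots)}$. Further blocks shrink geometrically, with $\log z_j/\log z_{j-1}$ fixed and $b_j$ decaying geometrically so that $\sum_j b_j\log z_{j-1}\le\tfrac12\log D$. The budget must also cover the crude $1+O(\cdot)$ losses.

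I would first try the parametrisation used in Ford--Halberstam and tune the constants. I expect the $\log_3 s$ term to come out of balancing the length of the first block against the level $b_1$: the naive choice gives a $\log_2 s$ loss, and shortening the first block to log-scale length about $1/\log_2 s$ reduces it to $\log_3 s$. The final step is to choose $r$ so that $z_r$ is bounded and to absorb the contribution of the smallest primes into $C_{\kappa,B}$, using the $B/\log z_j$ term together with the assumption $s\ge100$.
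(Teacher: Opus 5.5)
The paper does not prove this proposition. It quotes \cite[Theorem 3.6(a)]{FordNotes}, whose proof uses the Brun--Hooley sieve, so your framework (blocks, Bonferroni truncation, Hooley's inequality) is the intended one. However, two steps fail as written.

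First, the parameter choice, which you rightly call the crux, is set up the wrong way round. A short first block $(z^{1-1/\log s},z]$ only makes $E_1$ smaller: your $(\kappa/\log s)^s/s!$ is $e^{-s\log s-s\log_2 s+O(s)}$, a gain. The problem is what it leaves behind. The primes up to $z_1\approx z$ must still be sifted with relative error $e^{-s\log s+O(s\log_3 s)}$. A block with $\sum_{\Pa_j}g\asymp 1$ needs $2b_j\approx s$ on the scale $\log z_{j-1}\approx\log z$ for that, i.e.\ a second copy of essentially all of $\log D$.

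If you lower $2b_1$ to $\beta s$, the first block's own error forces $1-\beta\ll\log_2 s/\log s$. The second block then gets only $2b_2\ll s\log_2 s/\log s$, so its error is no better than $e^{-O(s\log_2 s)}$. Likewise, geometrically decaying $b_j$ make $E_j/V_j\gg 1$ once $2b_j$ falls below $e\sum_{\Pa_j}g$. The levels must grow as $\log z_{j-1}$ shrinks.

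What works is a long first block:
\begin{itemize}
\item Dispose of $z\le s$ by taking $\lambda^\pm=\mu$ on the divisors of $\prod_{p\le z}p$, all of which are $\le D$.
\item For $z>s$, take $z_1=z^{1/\log s}$. Then $\sum_{\Pa_1}g\le\kappa\log_2 s+B$ and $V_1^{-1}\le e^{B}(\log s)^{\kappa}$.
\item Take $2b_1\approx s(1-3/\log s)$. This gives $E_1/V_1\le e^{-2b_1\log(2b_1/(e\sum_{\Pa_1}g))+O(\log_2 s)}=e^{-s\log s+s\log_3 s+O_{\kappa,B}(s)}$.
\item The remaining budget, about $3s\log z_1$, pays for blocks with $\log z_j=e^{-(j-1)}\log z_1$ and $2b_j\approx s\,e^{(j-2)/2}$ for $j\ge 2$, down to $z_r=O(1)$. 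Each has relative error $e^{-s\log s+O_{\kappa,B}(s)}$ or smaller.
\end{itemize}
So the term $s\log_3 s$ is $s$ times the logarithm of $\sum_{\Pa_1}g\asymp\log_2 s$. It is the price of making the first block long enough that everything below it fits into $O(s\log z/\log s)$, not of making it short.

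Second, $|\lambda^-|\le 1$ is false for your lower sieve. Expanding $\prod_j\chi_j^+-\sum_k(\chi_k^+-\chi_k^-)\prod_{j\ne k}\chi_j^+$ gives
$\lambda^-(d)=\mu(d)\,(1-m(d))\prod_j\mathbf{1}_{\omega(d_j)\le 2b_j}$.
Here $d_j$ is the $\Pa_j$-part of $d$, and $m(d)$ is the number of $k$ with $\omega(d_k)=2b_k$. A given $d$ appears in one correction term for each saturated block, not just one. The support is a product set, so a $d$ saturating three blocks lies in it and has $|\lambda^-(d)|=2$; in general values up to $r-1$ occur. This normalization is exactly what gives $|\Delta|\le 1$ in Corollary \ref{cor:fl}.

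One repair is to build only $\lambda^+(d)=\mu(d)\prod_j\mathbf{1}_{\omega(d_j)\le 2b_j}$ by Brun--Hooley, and to get $\lambda^-$ from Buchstab's identity
$\mathbf{1}_{(n,\prod_{p\le z}p)=1}=1-\sum_{p\le z,\,p\mid n}\mathbf{1}_{(n,\prod_{q<p}q)=1}$.
Set $\lambda^-(1)=1$ and $\lambda^-(pe)=-\lambda^+_p(e)$, where $\lambda^+_p$ is the upper sieve for the primes $<p$ at level $D/p$. Every $d>1$ has a unique largest prime factor, so $|\lambda^-|\le 1$.

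For the error, note that $1-\sum_{p\le z}g(p)\prod_{q<p}(1-g(q))=\prod_{p\le z}(1-g(p))$. Hence the error is $\sum_{p\le z}g(p)\prod_{q<p}(1-g(q))\,O(\epsilon(s_p))$, where $\epsilon(s)=e^{-s\log s+s\log_3 s+C_{\kappa,B}s}$ and $s_p=\log(D/p)/\log p\ge s-1$. This is $\ll_{\kappa,B}s\,\epsilon(s)\prod_{p\le z}(1-g(p))$, and the extra factor $s$ is absorbed by enlarging $C_{\kappa,B}$.
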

\begin{cor}\cite[Theorem 3.6(b)]{FordNotes}\label{cor:fl}
Let $z,D,s,g,\kappa,B,C_{\kappa,B}$ be as in Proposition \ref{prop:sieve}. Let $\Pa$ be a set of primes and suppose $g(d)=0$ if $d\not\in\langle \Pa\rangle$. Let $(a_n)_{n\ge 1}$ be a summable nonnegative sequence. Let $X>0$. For any $d \in \langle \Pa\rangle$, let
\[ r_d:= \sum_{\substack{n:\, d \mid n}} a_n - g(d)X.\] 
Denote $\Pa(z) = \prod_{p\le z,\, p \in \Pa}p$. For some $|\Delta|\le 1$, we have
\begin{equation}\label{eq:flcor}
\sum_{n:\,  \gcd(n,\Pa(z))=1}a_n = X\prod_{p\le z} (1-g(p))(1+O(e^{-s\log s + s\log_3 s +C_{\kappa,B}s}))+\Delta\sum_{\substack{d\le D\\ d\mid \Pa(z)}}|r_d|.
\end{equation}
\end{cor}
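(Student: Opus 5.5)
The plan is to sandwich the sifted sum between an upper and a lower bound sieve sum and then apply Proposition \ref{prop:sieve} to each. Let $\lambda^{\pm}$ be the sieves given by Proposition \ref{prop:sieve} for the parameters $z,D$. Restrict them to divisors of $\Pa(z)$ by setting $\lambda^{\pm}_{\Pa}(d):=\lambda^{\pm}(d)\mathbf{1}_{d\mid \Pa(z)}$.

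First I check that these restricted functions are still upper and lower bound sieves on the relevant integers. For any $n$,
\[\sum_{d\mid n}\lambda^{\pm}_{\Pa}(d)=\sum_{d\mid \gcd(n,\Pa(z))}\lambda^{\pm}(d),\]
and this is $\ge$ (resp.\ $\le$) $\delta_{\gcd(n,\Pa(z)),1}=\mathbf{1}_{\gcd(n,\Pa(z))=1}$. Since $a_n\ge 0$, multiplying by $a_n$ and summing gives
\[\sum_{d}\lambda^{-}_{\Pa}(d)\sum_{d\mid n}a_n\;\le\; \sum_{\gcd(n,\Pa(z))=1}a_n\;\le\; \sum_{d}\lambda^{+}_{\Pa}(d)\sum_{d\mid n}a_n .\]
Interchanging the sums is legitimate because $(a_n)$ is summable and $\lambda^{\pm}$ has finite support.

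Next I insert the definition $\sum_{d\mid n}a_n=g(d)X+r_d$, so each bound splits into a main term $X\sum_d\lambda^{\pm}_{\Pa}(d)g(d)$ and a remainder $\sum_d\lambda^{\pm}_{\Pa}(d)r_d$.

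For the main term, note that $\lambda^{\pm}$ is supported on squarefree $d$ with $P(d)\le z$. Also $g(d)=0$ unless $d\in\langle\Pa\rangle$. A squarefree $d\in\langle \Pa\rangle$ with $P(d)\le z$ divides $\Pa(z)$, so the restriction to $d\mid\Pa(z)$ does not change the sum:
\[\sum_d\lambda^{\pm}_{\Pa}(d)g(d)=\sum_d\lambda^{\pm}(d)g(d).\]
Proposition \ref{prop:sieve} then evaluates this as $(1+O(E))\prod_{p\le z}(1-g(p))$, where $E=e^{-s\log s+s\log_3 s+C_{\kappa,B}s}$.

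For the remainder, $|\lambda^{\pm}|\le 1$ and the support is contained in $d\le D$, $d\mid \Pa(z)$. Hence
\[\Big|\sum_d\lambda^{\pm}_{\Pa}(d)r_d\Big|\le R:=\sum_{d\le D,\ d\mid\Pa(z)}|r_d|.\]

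Combining, I get $L\le \sum_{\gcd(n,\Pa(z))=1}a_n\le U$, where
\[U\le X\prod_{p\le z}(1-g(p))(1+c_1E)+R,\qquad L\ge X\prod_{p\le z}(1-g(p))(1-c_1E)-R\]
for an admissible constant $c_1$. The sifted sum therefore lies in the interval $[M(1-c_1E)-R,\;M(1+c_1E)+R]$, where $M=X\prod_{p\le z}(1-g(p))\ge 0$. Every point of this interval can be written as $M(1+\eta c_1E)+\Delta R$ with $|\eta|,|\Delta|\le 1$, which is exactly \eqref{eq:flcor}.

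No step here is a genuine obstacle. The only point needing care is the observation that restricting $\lambda^{\pm}$ to divisors of $\Pa(z)$ preserves the sieve inequalities while leaving the main term unchanged; the latter follows from the support conditions on $g$ and $\lambda^{\pm}$.
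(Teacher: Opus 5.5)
Your proof is correct and follows the paper's argument. Like the paper, you apply the sieve inequality to $\gcd(n,\Pa(z))$, interchange sums to get $\sum_{d\mid \Pa(z)}\lambda^{\pm}(d)(g(d)X+r_d)$, and invoke Proposition \ref{prop:sieve} for $\lambda^+$ and $\lambda^-$. You also spell out two points the paper leaves tacit: that restricting to $d\mid\Pa(z)$ leaves the main-term sum unchanged, and how the two one-sided bounds combine into a single $\Delta$ with $|\Delta|\le 1$.
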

\begin{proof}
For any upper bound sieve $\lambda$ we have
\begin{align*}
\sum_{n: \, \gcd(n,\Pa(z))=1} a_n&=\sum_{n} a_n \delta_{\gcd(n,\Pa(z)),1}\le \sum_{n} a_n \sum_{d\mid \gcd(n,\Pa(z))} \lambda(d) \\
&= \sum_{d \mid \Pa(z)}\lambda(d) \sum_{n:\, d\mid n}a_n= \sum_{d \mid \Pa(z)}\lambda(d) \left( g(d) X + r_d\right).
\end{align*}
Specializing $\lambda$ to the upper bound sieve $\lambda^+$ from Proposition \ref{prop:sieve}, we obtain the upper bound part of \eqref{eq:flcor}, i.e.~$\Delta\le 1$. The proof of the lower bound is similar.
\end{proof}
We can restate Corollary \ref{cor:fl} probabilistically.
\begin{cor}\label{cor:flprob}
Let $z,D,s,g,\kappa,B,C_{\kappa,B}$ be as in Proposition \ref{prop:sieve}. Let $\Pa$ be a set of primes and suppose $g(d)=0$ if $d\not\in\langle \Pa\rangle$. Denote $\Pa(z) = \prod_{p\le z,\, p \in \Pa}p$. Let $R$ be a  random variable taking values in $\NN$. For some $|\Delta|\le 1$ we have \[ \PP( \gcd(R,\Pa(z))=1) = \prod_{p \le z}(1-g(p)) (1+O(e^{-s\log s + s\log_3 s+C_{\kappa,B}s})) + \Delta \sum_{\substack{d\le D\\d \mid \Pa(z)}} \left|\PP(d\mid R)-g(d)\right|.\]
\end{cor}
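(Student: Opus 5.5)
The plan is to deduce Corollary \ref{cor:flprob} directly from Corollary \ref{cor:fl}, specializing the sequence $(a_n)$ to the distribution of $R$. Concretely, I would set
\[ a_n := \PP(R=n)\quad (n\in\NN), \qquad X:=1.\]
This sequence is nonnegative and summable, since $\sum_n a_n = 1$. Its divisor sums are
\[ \sum_{n:\,d\mid n} a_n = \PP(d\mid R).\]
Hence the remainder terms of Corollary \ref{cor:fl} become
\[ r_d = \PP(d\mid R)-g(d)\qquad (d\in\langle\Pa\rangle).\]
The left-hand side of \eqref{eq:flcor} is
\[ \sum_{n:\,\gcd(n,\Pa(z))=1} \PP(R=n)=\PP(\gcd(R,\Pa(z))=1).\]

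Substituting these identifications into \eqref{eq:flcor} gives exactly the claimed formula. The main term is $\prod_{p\le z}(1-g(p))\bigl(1+O(e^{-s\log s+s\log_3 s+C_{\kappa,B}s})\bigr)$, and the error is $\Delta\sum_{d\le D,\, d\mid \Pa(z)}|\PP(d\mid R)-g(d)|$ with the same $|\Delta|\le 1$.

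One point to check is that the sum over $d$ only involves $d\in\langle\Pa\rangle$, where $r_d$ is defined. This holds because every $d\mid\Pa(z)$ is a product of primes of $\Pa$.

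There is no real obstacle; the argument is a direct substitution. The only care needed is confirming that the hypotheses on $z,D,s,g$ carry over unchanged, which they do because they are literally the same as in Corollary \ref{cor:fl}.
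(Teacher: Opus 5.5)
Your proposal is correct and is exactly the paper's proof: the paper likewise applies Corollary \ref{cor:fl} with $a_n=\PP(R=n)$ and $X=1$. The identifications you spell out are the ones that argument needs: $r_d=\PP(d\mid R)-g(d)$, the left-hand side equals $\PP(\gcd(R,\Pa(z))=1)$, and every $d\mid\Pa(z)$ lies in $\langle\Pa\rangle$.
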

\begin{proof}
Apply Corollary \ref{cor:fl} with $a_n = \PP(R=n)$ and $X=1$.
\end{proof}
When $s$ is small we complement Corollary \ref{cor:flprob} with the following upper bound. It is a probabilistic restatement of Theorem~2.4 in \cite{FordNotes}, which is again based on the Brun--Hooley sieve.
\begin{lem}\cite[Theorem 2.4]{FordNotes}\label{lem:upperprob}
Let $z\ge 2$. Let $g\colon \NN\to [0,1]$ be a multiplicative function satisfying \eqref{eq:iwanieccond} for some $\kappa\ge 0$ and $B>0$, for all $2\le y \le w \le z$. Let $\Pa$ be a set of primes and suppose $g(d)=0$ if $d\not\in\langle \Pa\rangle$. Denote $\Pa(z) = \prod_{p\le z,\, p \in \Pa}p$. Let $R$ be a  random variable taking values in $\NN$. Then
\[ \PP( \gcd(R,\Pa(z))=1) \le O_{\kappa,B}\bigg( \prod_{p \le z}(1-g(p))\bigg) + \sum_{\substack{d\le z\\d \in \langle \Pa\rangle}}\mu^2(d) \left|\PP(d\mid R)-g(d)\right|.\]
\end{lem}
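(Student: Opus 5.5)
The plan is to deduce the lemma from the upper bound sieve $\lambda^+$ of Proposition \ref{prop:sieve}. We run it at a slightly smaller sifting level so that its support lies in $[1,z]$, and pay for the primes we drop using the hypothesis \eqref{eq:iwanieccond}. The one-sided error is then handled exactly as in the proof of Corollary \ref{cor:fl}.

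First we dispose of small $z$, say $z\le 2^{200}$. Apply \eqref{eq:iwanieccond} with $y=2$ and $w=z$. It gives
\[\prod_{p\le z}(1-g(p))^{-1}\le \Big(\tfrac{\log z}{\log 2}\Big)^{\kappa}e^{B/\log 2}\ll_{\kappa,B}1.\]
Hence $\prod_{p\le z}(1-g(p))\gg_{\kappa,B}1$, and the trivial bound $\PP(\gcd(R,\Pa(z))=1)\le 1$ already has the required shape.

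Now assume $z>2^{200}$. Put $D=\lfloor z\rfloor$ and $z'=\lfloor z^{1/100}\rfloor\ge 2$, so that $z'\le D^{1/2}$ and $s=\log D/\log z'\ge 100$. Proposition \ref{prop:sieve} applies to $g$ on the primes up to $z'$; the hypothesis there is inherited from \eqref{eq:iwanieccond} since $z'\le z$. We need $g(p)<1$ for $p\le z'$. If $g(p)=1$ for some $p\le z$, then the product on the right-hand side of the lemma vanishes, while the left-hand side of \eqref{eq:iwanieccond} is infinite, which contradicts the hypothesis; so this case does not occur.

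Let $\lambda^+$ be the upper bound sieve from the proposition. It is supported on squarefree $d\le D\le z$ with $P(d)\le z'$. Since $\Pa(z')\mid \Pa(z)$, we have
\[\mathbf 1_{\gcd(R,\Pa(z))=1}\le \mathbf 1_{\gcd(R,\Pa(z'))=1}\le \sum_{d\mid \gcd(R,\Pa(z'))}\lambda^+(d).\]
Taking expectations gives
\[\PP(\gcd(R,\Pa(z))=1)\le \sum_{d\mid \Pa(z')}\lambda^+(d)g(d)+\sum_{d\mid \Pa(z')}\lambda^+(d)\big(\PP(d\mid R)-g(d)\big).\]
For the second sum we use $|\lambda^+|\le 1$ together with the support properties. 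Every contributing $d$ is squarefree, lies in $\langle\Pa\rangle$ and satisfies $d\le z$, so this sum is at most $\sum_{d\le z,\,d\in\langle \Pa\rangle}\mu^2(d)|\PP(d\mid R)-g(d)|$.

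For the first sum, $g$ vanishes off $\langle\Pa\rangle$, so it equals $\sum_{d\ge1}\lambda^+(d)g(d)$. Since $s\ge 100$ and the error term $e^{-s\log s+s\log_3 s+C_{\kappa,B}s}$ is bounded by a constant depending only on $\kappa,B$, Proposition \ref{prop:sieve} gives
\[\sum_{d\ge1}\lambda^+(d)g(d)\ll_{\kappa,B}\prod_{p\le z'}(1-g(p)).\]
Finally we extend the product from $z'$ to $z$. By \eqref{eq:iwanieccond} with $y=z'$ and $w=z$,
\[\prod_{z'<p\le z}(1-g(p))^{-1}\le \Big(\tfrac{\log z}{\log z'}\Big)^{\kappa}e^{B/\log z'}\ll_{\kappa,B}1,\]
because $\log z/\log z'\le 101$, say, and $\log z'\ge \log 2$. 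This yields $\prod_{p\le z'}(1-g(p))\ll_{\kappa,B}\prod_{p\le z}(1-g(p))$, which completes the bound.

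The only delicate points are the bookkeeping of parameters: keeping the support of $\lambda^+$ inside $d\le z$ (this is why we sieve only up to $z^{1/100}$), checking that the constraints $2\le z'\le D^{1/2}$ and $s\ge100$ of Proposition \ref{prop:sieve} hold, and keeping the loss from the primes in $(z',z]$ to a constant via the dimension condition. No genuinely new sieve input beyond Proposition \ref{prop:sieve} is needed.
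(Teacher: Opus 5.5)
Your proof is correct, but it takes a different route from the paper. The paper gives no argument of its own. It quotes Ford's Theorem 2.4, a Brun--Hooley upper-bound sieve that sifts all primes up to $z$ with support in $d\le z$, and translates it into probabilistic language exactly as in the proof of Corollary~\ref{cor:flprob}.

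You instead derive the lemma from the fundamental lemma already quoted as Proposition~\ref{prop:sieve}. Your steps are:
\begin{itemize}
\item sift only the primes up to $z'=\lfloor z^{1/100}\rfloor$ at level $D=\lfloor z\rfloor$;
\item use the monotonicity $\mathbf{1}_{\gcd(R,\Pa(z))=1}\le \mathbf{1}_{\gcd(R,\Pa(z'))=1}$;
\item apply the dimension hypothesis \eqref{eq:iwanieccond} on $[z',z]$ to show that discarding the primes in $(z',z]$ costs only a factor $O_{\kappa,B}(1)$ in the main term.
\end{itemize}

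What your route buys is self-containedness. Lemma~\ref{lem:upperprob} becomes a consequence of Proposition~\ref{prop:sieve}, so the paper would need only one external sieve result instead of two. You also get slightly more: the error sum is restricted to squarefree $d\le z$ with $P(d)\le z^{1/100}$. The price is some bookkeeping that the citation avoids: integrality of $z'$ and $D$, the separate trivial case of bounded $z$, and the check that $s\ge 100$. That check deserves to be stated explicitly: $z'^{100}$ is an integer at most $z$, hence at most $\lfloor z\rfloor=D$.

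One small numerical slip: because of the floor, the claim $\log z/\log z'\le 101$ is false in general. For $z^{1/100}$ just below $5$ one has $z'=4$, and the ratio is about $116$. However, the ratio is always at most $100\log 5/\log 4<117$, and an absolute bound is all your argument needs.
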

\section{Proof of Theorem \ref{thm:KM}}
\begin{lem}\label{lem:tv}
	Under the notation and assumptions of Theorem \ref{thm:KM},
\[		\dtv( (\nu_p(A))_{p \in \Pa}, (G_p)_{p \in \Pa})=\sum_{\substack{g(d)\neq 0}} \max\bigg\{  \prod_{p\in \Pa}(g(p^{\nu_p(d)})-g(p^{\nu_p(d)+1}))-\PP(d\mid A,\, \gcd(A/d,\prod_{p \in \Pa}p)=1),0\bigg\}.\]
\end{lem}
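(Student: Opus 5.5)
We apply the identity \eqref{eq:dtvmax} with $X=(G_p)_{p\in\Pa}$ and $Y=(\nu_p(A))_{p\in\Pa}$, and then reindex the sum over tuples by integers. The order is deliberate: $\dtv$ is symmetric, and we want the maxima to take the form $\max\{\PP(G=\cdot)-\PP(\nu(A)=\cdot),0\}$. Both tuples take values in $\Omega=\NN_{\ge0}^{\Pa}$. Since $\Pa$ is a finite set of primes (all $\le y$), the map $d\mapsto (\nu_p(d))_{p\in\Pa}$ is a bijection from $\langle\Pa\rangle$ onto $\Omega$. Hence
\[\dtv((\nu_p(A))_{p\in\Pa},(G_p)_{p\in\Pa})=\sum_{d\in\langle\Pa\rangle}\max\Big\{\PP\big((G_p)_p=(\nu_p(d))_p\big)-\PP\big((\nu_p(A))_p=(\nu_p(d))_p\big),0\Big\}.\]

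Next we compute the two probabilities. By independence and the definition of $G_p$,
\[\PP\big((G_p)_p=(\nu_p(d))_p\big)=\prod_{p\in\Pa}\big(g(p^{\nu_p(d)})-g(p^{\nu_p(d)+1})\big).\]
For the other term, $\nu_p(A)=\nu_p(d)$ for all $p\in\Pa$ holds exactly when $d\mid A$ and no $p\in\Pa$ divides $A/d$. Indeed, $d\mid A$ gives $\nu_p(A)\ge\nu_p(d)$, and equality for $p\in\Pa$ is equivalent to $p\nmid A/d$. So this probability equals $\PP(d\mid A,\ \gcd(A/d,\prod_{p\in\Pa}p)=1)$, which gives the summand in the lemma.

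It remains to restrict the sum to $g(d)\neq0$. Suppose $d\in\langle\Pa\rangle$ has $g(d)=0$. By multiplicativity, $g(p^{\nu_p(d)})=0$ for some $p\in\Pa$. Monotonicity in $k$ and nonnegativity then force $g(p^{\nu_p(d)+1})=0$, so that factor of the product vanishes. The summand becomes $\max\{-\PP(\cdots),0\}=0$. For $d\notin\langle\Pa\rangle$ we have $g(d)=0$ by hypothesis, so writing the sum over all $d$ with $g(d)\ne0$ is the same as summing over $d\in\langle\Pa\rangle$ with $g(d)\neq0$.

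We should also confirm that $(G_p)$ is a genuine probability distribution, since this is implicitly needed for \eqref{eq:dtvmax}. The probabilities $g(p^k)-g(p^{k+1})$ are nonnegative, and their sum over $k$ telescopes to $g(1)-\lim_k g(p^k)=1$, using $g(1)=1$ and the hypothesis $\lim_k g(p^k)=0$.

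No step is a real obstacle; the only points needing care are the direction in \eqref{eq:dtvmax} and the vanishing of the terms with $g(d)=0$.
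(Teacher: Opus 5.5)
Your proposal is correct and follows the paper's proof: you apply \eqref{eq:dtvmax} with $X=(G_p)_{p\in\Pa}$ and $Y=(\nu_p(A))_{p\in\Pa}$, reindex via the bijection between $\langle\Pa\rangle$ and $\NN_{\ge0}^{\Pa}$, compute the two point probabilities, and drop the terms with $g(d)=0$ using multiplicativity and monotonicity. Your extra check that the $G_p$ form genuine probability distributions (telescoping with $g(1)=1$ and $\lim_k g(p^k)=0$) is a harmless addition that the paper leaves implicit in the hypotheses of Theorem \ref{thm:KM}.
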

\begin{proof}
Consider the map that takes a sequence of nonnegative integers $\mathbf{a}=(a_p)_{p\in \Pa}$ to the positive integer $d=d(\mathbf{a}):=\prod_{p\in \Pa} p^{a_p}$. Note that $\mathbf{a}\mapsto d$ is a bijection from the set of sequences of nonnegative integers indexed by primes $p\in \Pa$ to the set of positive integers $d$ with $d\in \langle \Pa\rangle$. The inverse of the bijection takes $d$ to $(\nu_p(d))_{p\in \Pa}$. In this notation,
	\begin{align*}
		\PP((\nu_p(A))_{p\in \Pa}=\mathbf{a}) &= \PP(d \mid A,\, \gcd(A/d,\prod_{p\in \Pa}p)=1),\\
		\PP((G_p)_{p\in \Pa}=\mathbf{a}) &= \prod_{p\in \Pa} \PP(G_p = a_p)=\prod_{p\in \Pa} (g(p^{\nu_p(d)})-g(p^{\nu_p(d)+1})).
	\end{align*}
	The lemma now follows from \eqref{eq:dtvmax} with $X=(G_p)_{p\in \Pa}$ and $Y=(\nu_p(A))_{p\in \Pa}$, except that the $d$-sum we obtain has, instead of the condition $g(d)\neq 0$, the condition $d\in \langle \Pa\rangle$. We may replace $d\in \langle \Pa \rangle$ with $g(d)\neq 0$ due to two observations: first, $g(d)\neq 0$ implies $d\in \langle \Pa\rangle$. Second, if  $d\in\langle \Pa \rangle$ and $g(d)=0$ both hold then the corresponding $d$-th summand is $0$ and can be discarded.
\end{proof}
We use the formula for the total variation distance as given in Lemma \ref{lem:tv}. The range $d>T$ is handled using the trivial bound $\max\{a-b,0\}\le a$, obtaining the bound
\[\le\sum_{\substack{d> T\\ g(d)\neq 0}} g(d) \prod_{p\in \Pa}(1-g(p^{\nu_p(d)+1})/g(p^{\nu_p(d)}))\]
for the contribution of this range, which can be absorbed in $D_1$. It remains to handle the range $d\le T$. Let $A_d$ be the random variable constructed by dividing $A$ by $d$, conditioned on the event $d\mid A$. Then in view of Lemma \ref{lem:tv} we aim to upper bound
\begin{equation}\label{eq:aim}
	\sum_{\substack{d\le T\\g(d)\neq 0}} \max\bigg\{   \prod_{p\in \Pa}(g(p^{\nu_p(d)})-g(p^{\nu_p(d)+1})) - \PP(d\mid A)\PP( \gcd(A_d,\prod_{p \in \Pa}p)=1),0\bigg\}.
\end{equation}
(If $\PP(d\mid A)=0$ we  define $\PP( \gcd(A_d,\prod_{p \in \Pa}p)=1)=0$ formally). When $g(d)\neq 0$ we may write
\begin{align*}
	&\prod_{p\in \Pa}(g(p^{\nu_p(d)})-g(p^{\nu_p(d)+1}))-\PP(d\mid A)\PP( \gcd(A_d,\prod_{p \in \Pa}p)=1) \\
	&=(g(d) -\PP(d\mid A))\prod_{p\in \Pa}(1-g(p^{\nu_p(d)+1})/g(p^{\nu_p(d)}))\\
	&\qquad +\PP(d\mid A) \big(\prod_{p\in \Pa}(1-g(p^{\nu_p(d)+1})/g(p^{\nu_p(d)}))-\PP( \gcd(A_d,\prod_{p \in \Pa}p)=1)\big).
\end{align*}
The contribution of values of $d$ such that $g(p^{\nu_p(d)})=g(p^{\nu_p(d)+1})$ holds for some $p\in \Pa$ to \eqref{eq:aim} is $0$, so we may restrict our attention to values of $d$ such that  $g(p^{\nu_p(d)+1})<g(p^{\nu_p(d)})$ for all $p\in \Pa$. It follows that  \eqref{eq:aim} is bounded (using  $\max\{a+b,0\}\le |a+b|$ and the triangle inequality)  by 
\begin{align}\label{eq:2sums}
	&\le  \sum_{\substack{d\le T\\ g(d)\neq 0}} |g(d)-\PP(d\mid A)|\prod_{p\in \Pa}(1-g(p^{\nu_p(d)+1})/g(p^{\nu_p(d)})) \\
	\label{eq:2sums2}&\qquad + \sum_{\substack{d\le T\\ g(d)\neq 0\\ \forall p \in \mathcal{P}:\, g(p^{\nu_p(d)+1})<g(p^{\nu_p(d)})}} \PP(d\mid A) \big|\prod_{p \in \Pa}(1-g(p^{\nu_p(d)+1})/g(p^{\nu_p(d)}))-\PP(\gcd(A_d,\prod_{p \in \Pa}p)=1)\big|.
\end{align}
The sum \eqref{eq:2sums} can be absorbed in $D_2$. It remains to bound \eqref{eq:2sums2}, which we handle using the fundamental lemma of sieve theory. Applying Corollary \ref{cor:flprob} if $u$ is sufficiently large, and Lemma \ref{lem:upperprob} otherwise, we obtain
\begin{equation}\label{eq:flst}
	\PP( \gcd(R,\prod_{p\in \Pa}p)=1) = \prod_{p \in \Pa}(1-h(p)) (1+O(f(u))) + O\big(\sum_{\substack{e\le y^u\\ e \in \langle \Pa\rangle }} \mu^2(e) \left|\PP(e\mid R)-h(e)\right|\big)
\end{equation}
for any random variable $R$ taking values in $\NN$, and any multiplicative $h\colon \NN\to [0,1]$ vanishing outside of $\langle \Pa \rangle$, satisfying $h(p)<1$ for all $p\in \Pa$, and 
\[ \prod_{a\le p \le b} (1-h(p))^{-1} \le \left( \frac{\log b}{\log a}\right)^{\kappa}\exp\left( \frac{B}{\log a}\right)\]
for all $2\le a \le b \le y$. If $\PP(d\mid A)\neq 0$, $g(d)\neq 0$ and $g(p^{\nu_p(d)+1})<g(p^{\nu_p(d)})$ for all $p\in \Pa$, we apply \eqref{eq:flst} with $R=A_d$ and $h(n)=g(nd)/g(d)$. We may do so due to the requirement \eqref{eq:iwaniecd}. Observe $h(p)=g(p^{\nu_p(d)+1})/g(p^{\nu_p(d)})$. From \eqref{eq:flst} we find that the sum in \eqref{eq:2sums2} is bounded by
\[ \ll E_1 + E_2\]
where
\begin{align*}
	E_1&:= \sum_{\substack{d\le T \\  g(d)\neq 0}} \PP(d \mid A)\prod_{p\in \Pa}(1-g(p^{\nu_p(d)+1})/g(p^{\nu_p(d)}))f(u),\\
	E_2&:= \sum_{\substack{d\le T\\   g(d)\neq 0}} \PP(d \mid A) \sum_{\substack{ e\le y^{u}\\ e \in \langle \Pa \rangle }} \mu^2(e)|\PP(e \mid A_d)-g(de)/g(d)|.
\end{align*}
(If $\PP(d\mid A)=0$ we  define $\PP(e\mid A_d)=0$ formally). In $E_1$ and $E_2$, the parameter $u=u_d\ge 1$ may be chosen arbitrarily. We define it implicitly by $y^u=S/d$ ($u$ is at least $1$, since $S/d\ge S/T\ge y$ by our assumption on $S$ and $T$). By the triangle inequality, $E_1$ is bounded by $E_{1,1}+E_{1,2}$ where
\begin{align*}
	E_{1,1}&:= \sum_{\substack{ d\le T\\ g(d)\neq 0}} g(d)\prod_{p\in \Pa}(1-g(p^{\nu_p(d)+1})/g(p^{\nu_p(d)})) f(u),\\
	E_{1,2}&:= \sum_{\substack{ d\le T\\ g(d)\neq 0}} |\PP(d\mid A)-g(d)|\prod_{p\in \Pa}(1-g(p^{\nu_p(d)+1})/g(p^{\nu_p(d)})) f(u).
\end{align*}
The error term $E_{1,1}$ can be absorbed in $D_1$, while $E_{1,2}$ can be absorbed in $D_2$. Finally we  estimate $E_2$. By the triangle inequality, the contribution of a given pair of $d$ and $e$ to $E_2$ (when $\PP(d\mid A)\neq 0$) is at most $\mu^2(e)$ times
\begin{multline*}
	\PP(d\mid A) | \PP(e\mid A_d)-g(de)/g(d)| =\PP(d\mid A) |\PP(de \mid A)/\PP(d \mid A) - g(de)/g(d)| \\
	=|\PP(de\mid A)-(g(de)/g(d)) \PP(d\mid A)|\le |\PP(de\mid A)-g(de)| + \frac{g(de)}{g(d)}|\PP(d\mid A)-g(d)|.
\end{multline*}
Thus
\[
E_2\le  \sum_{\substack{ d\le T \\ g(d)\neq 0}} \sum_{\substack{e\le S/d\\ e \in \langle \Pa \rangle}} \mu^2(e) \bigg(|\PP(de \mid A)-g(de)|+\frac{g(de)}{g(d)}|\PP(d \mid A) -g(d)|\bigg).\]
Let us denote $de$ by $m$, so that $E_2\le E_{2,1}+E_{2,2}$ where
\begin{align*}
	E_{2,1}=\sum_{\substack{m\le S\\   m \in \langle \Pa \rangle}} |\PP(m \mid A)-g(m)| \sum_{\substack{e\mid m\\ m/e\le T}} \mu^2(e),\qquad 
	E_{2,2}=\sum_{\substack{d \le T\\ g(d)\neq 0}} |\PP(d\mid A)-g(d)| \sum_{\substack{e\le S/d \\  e \in \langle \Pa \rangle}} \mu^2(e) \frac{g(de)}{g(d)}.
\end{align*}
The inner sum in $E_{2,1}$ is trivially at most $2^{\omega(m)}$, so $E_{2,1}$ can be absorbed in $D_2$. The inner sum in $E_{2,2}$ is at most  $\prod_{p\in \Pa}(1+g(p^{\nu_p(d)+1})/g(p^{\nu_p(d)}))$ and so $E_{2,2}$ can be absorbed in $D_2$ as well.

\section{Approaches to bounding \texorpdfstring{$D_1$}{D1}}\label{sec:rankin}
Recall that $D_1$ is the first error term in the statement of Theorem \ref{thm:KM}. Below $u=\log x / \log y$ as usual.
\subsection{Elliott's approach}
Elliott \cite[Chapter 3]{Elliott1} has original approaches to bounding the sums in $D_1$.  While they do not lead to sharp bounds, they are very simple. In the discussion below we assume for simplicity that $g$ is completely multiplicative (in addition to satisfying the conditions of Theorem \ref{thm:KM}, which forces $g$ to be supported on $y$-smooth numbers). The sum over $d>T$ in $D_1$ is \[\sum_{\substack{ d>T\\ g(d)\neq 0}} g(d)\prod_{p\in \Pa}(1-g(p^{\nu_p(d)+1})/g(p^{\nu_p(d)}))=\prod_{p \le y}(1-g(p))\sum_{d>T}g(d).\]
Elliott made the beautiful observation that this sum is a  tail probability of sums of independent random variables \cite[pp.~126--127]{Elliott1}: if for every prime $p\le y$ we define independent random variables $X_{p}$ by $\PP(X_{p}=\log (p^k))=g(p^k)(1-g(p))$ ($k\in \NN_{\ge 0}$) then
\[\prod_{p \le y}(1-g(p))\sum_{d>T}g(d) = \PP\big( \sum_{p\le y} X_{p}> \log T\big).\]
Elliott developed a simple lemma \cite[Lemma 3.3]{Elliott1} to bound this tail, which is closely related to \textit{Bennett's inequality} \cite{Bennett}. Next we discuss how one can bound the sum over $d\le T$ in $D_1$. This ends up being easy to do if $S$ is much larger than $Ty$, say $S= Tx^{\delta}$ for some fixed $\delta>0$ (the interesting regime is $y=x^{o(1)}$ so $y\le x^{\delta}$ may be assumed). Since $f(u_{d})\ll_{\varepsilon} u_d^{-u_d(1-\varepsilon)}\le (\delta u)^{-\delta u(1-\varepsilon)}$ for $d\le T$ and this choice of $S$, we have
\begin{align*}
	\sum_{\substack{ d \le T\\ g(d)\neq 0}} g(d) \prod_{p\in \Pa}(1-g(p^{\nu_p(d)+1})/g(p^{\nu_p(d)})) f(u_d)&=\prod_{p \le y}(1-g(p))\sum_{\substack{d\le T\\ P(d)\le y}} g(d)f(u_d)\\
	&\ll_{\varepsilon} (\delta u)^{-\delta u(1-\varepsilon)}\prod_{p \le y}(1-g(p))\sum_{ P(d)\le y}g(d) \\
	&\le (\delta u)^{-\delta u(1-\varepsilon)} \prod_{p \le y}(1-g(p))\prod_{p \le y}\big(\sum_{i \ge 0}g(p^i)\big)\\
	&= (\delta u)^{-\delta u(1-\varepsilon)}
\end{align*}
which decays rapidly in $u$. However, ideally we would want to take $S=Ty$.
\subsection{The case \texorpdfstring{$g(d)=1/d$}{gd=1/d}}
When $\Pa$ is the set of primes up to $y$, and $g(d)=1/d$ if $d \in \langle \Pa \rangle$, there are various ways of bounding $D_1$, because then the task may be reduced to the well-understood subject of the density of $y$-smooth numbers. We discuss these approaches briefly, and focus only on the first sum in $D_1$, namely
\[ \sum_{\substack{ d>T\\ g(d)\neq 0}} g(d)\prod_{p\in \Pa}(1-g(p^{\nu_p(d)+1})/g(p^{\nu_p(d)})) =\prod_{p \le y}(1-1/p)\sum_{\substack{d>T\\ P(d)\le y}} d^{-1}\]
where $T\ge y$. One approach, used in \cite[Lemma~3.5]{FordShifted}, is integration by parts together with the bound $\sum_{d\le x,\, P(d)\le y} 1 \ll_{\varepsilon} x^{\varepsilon}+ x u^{-u}$. A second approach is  integration by parts together with the estimate $\sum_{d\le cT,\, P(d)\le y} 1\ll c^{\alpha}\sum_{d\le T,\, P(d)\le y} 1$ uniformly in $c\ge 1$ \cite[Thm.~III.5.23]{Ten}, where $\alpha\in (0,1]$  minimizes $T^{\alpha} \zeta(\alpha,y)$ where
\[\zeta(s,y) := \sum_{P(d)\le y}d^{-s} = \prod_{p \le y}(1-p^{-s})^{-1}.\]
This $\alpha$ is known as the saddle point associated with $y$-smooth numbers up to $T$. The third approach, which is easiest to generalize, is Rankin's trick:
\[\sum_{\substack{d>T\\ P(d)\le y}} d^{-1} \le  T^{\alpha-1} \sum_{P(d)\le y} d^{-\alpha}=  T^{\alpha-1}\zeta(\alpha,y)\]
holds for the same $\alpha$ as above. It is known that \cite[Thm.~III.5.21]{Ten}
\[\min_{\alpha \in (0,1]} T^{\alpha-1} \zeta(\alpha,y) \asymp \frac{\Psi(T,y)}{T} \sqrt{\frac{\log T}{\log y}\max\{1,\frac{\log T}{y}\}}\log\big(1+\frac{y}{\log T}\big).\]
\subsection{Rankin's trick}
\begin{lem}\label{lem:rankin}
Let $h\colon \NN\to \RR_{\ge 0}$ be a nonnegative multiplicative function. Suppose that for some $\kappa,B> 0$ we have
	\begin{equation}\label{eq:mertensk}
		\bigg|\sum_{p\le t} \frac{h(p)\log p}{p}-\kappa \log t\bigg|\le B
	\end{equation}
	for all $t \in [2,y]$. Suppose $x\ge y \ge 2$. If $h$ is supported on squarefrees then
	\[ \sum_{\substack{d>x\\ P(d)\le y}} \frac{h(d)}{d} \ll \prod_{p\le y}\left(1+\frac{h(p)}{p}\right) e^{O_{\kappa,B}(u)}  (u\log (u+1))^{-u}. \]
	If for all $p\le y$ and $i\ge 2$ we have $h(p^i)\le (i+1)^B$, then
	\[ \sum_{\substack{d>x\\ P(d)\le y}} \frac{h(d)}{d} \ll  \prod_{p\le y}\left(\sum_{i=0}^{\infty}\frac{h(p^i)}{p^i}\right)\cdot  \begin{cases}  (u\log (u+1))^{-u}e^{O_{\kappa,B}(u)} &\text{if }x\ge y \ge \log x,\\ x^{-1} e^{O_{\kappa,B}\big(\log \big(1+\tfrac{\log x}{y}\big)\tfrac{y}{\log y}\big)} &\text{if }\log x \ge y \ge 2.\end{cases}\]
\end{lem}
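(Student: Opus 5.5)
Use Rankin's trick. For any $\alpha\in(0,1]$, $\mathbf{1}_{d>x}\le (d/x)^{1-\alpha}$, so
\[ \sum_{\substack{d>x\\ P(d)\le y}} \frac{h(d)}{d} \le x^{\alpha-1}\sum_{P(d)\le y}\frac{h(d)}{d^{\alpha}} = x^{\alpha-1}\prod_{p\le y}\Big(\sum_{i\ge 0}\frac{h(p^i)}{p^{i\alpha}}\Big).\]
Dividing by the normalising product $\prod_{p\le y}(\sum_i h(p^i)p^{-i})$, it suffices to bound
\[\exp\Big(-(1-\alpha)\log x+\sum_{p\le y}\log\frac{\sum_i h(p^i)p^{-i\alpha}}{\sum_i h(p^i)p^{-i}}\Big).\]
In the squarefree case the summand is $\log(1+h(p)p^{-\alpha})-\log(1+h(p)p^{-1})\le h(p)(p^{-\alpha}-p^{-1})\le \frac{h(p)}{p}(p^{1-\alpha}-1)$. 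Write $1-\alpha=\eta/\log y$ with $\eta\ge 0$. By partial summation from \eqref{eq:mertensk},
\[\sum_{p\le y}\frac{h(p)}{p}(p^{\eta/\log y}-1)\le \kappa\int_1^{y}\frac{e^{\eta\log t/\log y}-1}{t\log t}\,dt+O_{\kappa,B}(\cdot) = \kappa\int_0^{\eta}\frac{e^{v}-1}{v}\,dv+O_{\kappa,B}\Big(\frac{e^{\eta}}{1+\eta}\cdot(1+\eta)\Big).\]
The error term needs care: the boundary term at $t=y$ is $B(e^\eta-1)/\log y\cdot(\ldots)$, and the integral of the error gives $O(B\eta e^{\eta}/\log y)$-type contributions. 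The quantity $\int_0^\eta (e^v-1)/v\,dv$ is $\ll e^{\eta}/\eta$ for large $\eta$.

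Choose $\eta=\log(u\log(u+1))$ when $u$ is large; for bounded $u$, take $\alpha=1$ trivially. This is admissible ($\alpha>0$) as long as $\eta<\log y$, which is exactly the regime $y\ge \log x$ up to constants. The total exponent is then
\[-\eta u+O_{\kappa}(e^\eta/\eta)+O_{\kappa,B}(\cdot)= -u\log(u\log(u+1))+O_{\kappa,B}(u),\]
since $e^\eta/\eta\asymp u\log(u+1)/\log u\asymp u$. For the squarefree statement the same choice works in all ranges: if $y<\log x$ then $\alpha$ may be taken small or negative-free with $\alpha\to 0$, and the squarefree sum is bounded by $\prod(1+h(p)/p)$ times a quantity that is genuinely $0$ once $x>\prod_{p\le y}p$. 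I would treat that edge range by capping $\eta\le \tfrac12\log y$ and checking that the bound still holds, or simply noting that the sum is empty when $x\ge e^{(1+o(1))y}$.

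In the general case, the prime powers $i\ge2$ contribute $\sum_{i\ge 2}(i+1)^B p^{-i\alpha}$. For $\alpha\ge 1/2+\delta$ this is $\ll_B p^{-2\alpha}$, summable and harmless: it adds at most $\sum_p p^{-2\alpha}\ll 1$ for $\alpha\ge 2/3$, say, i.e.\ $O(1)\le O(u)$. In the range $y\ge\log x$, $\eta\le \log(u\log u)+O(1)$ gives $1-\alpha=\eta/\log y\le 1/3$ once $y$ is large relative to $u$. The remaining cases, where $y$ is only slightly above $\log x$, can be absorbed into $e^{O(u)}$-type losses or handled via the second case.

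For $y\le\log x$, take $\alpha=c/\log y$ or similar small fixed-size values, bounding each local factor by $\ll_B (1-p^{-\alpha})^{-B-1}$. This gives $\log$ of the product $\ll_B\sum_{p\le y}\log(1/(1-p^{-\alpha}))$. Choosing $\alpha=\log(1+\log x/y)/\log y$-scale, the ratio $x^{\alpha-1}\cdot$ product yields $x^{-1}\exp(O(\frac{y}{\log y}\log(1+\frac{\log x}{y})))$. This is the usual analysis of $\zeta(\alpha,y)$ for small $y$: each factor $(1-p^{-\alpha})^{-1}\le 1+1/(\alpha\log p)$, and $x^{\alpha}=\exp(\alpha\log x)$ is balanced against it. The main obstacle is making the partial-summation error from \eqref{eq:mertensk} uniform, i.e.\ $O_{\kappa,B}(u)$ rather than $O(e^\eta)$, and the bookkeeping across the transition $y\approx\log x$.
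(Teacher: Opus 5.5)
Your framework matches the paper's: Rankin's trick, splitting each local factor into $\frac{h(p)}{p}(p^{1-\alpha}-1)$ plus the higher prime powers, and $\alpha=1-\log(u\log(u+1))/\log y$ for large $y$. The gaps are in the ranges where the real work lies.

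\textbf{Non-squarefree case, $\log x\le y\le(\log x)^3$.} This is the main gap, and it is not a thin edge zone. For $y=(\log x)^{\beta}$ with $1<\beta<3$, your $\alpha$ is about $1-1/\beta$. So $\alpha<2/3$ throughout, and $\alpha\le 1/2$ once $\beta\le 2$. The higher powers then contribute $\sum_{p\le y}\log\bigl(1+C_B(1-p^{-\alpha})^{-B-1}p^{-2\alpha}\bigr)$, which is not $O(1)$. Proving it is $O(u)$ is exactly what is missing, and ``the second case'' cannot be invoked since it concerns $y\le\log x$. The paper handles $2\log x\le y\le(\log x)^3$ by splitting at $p=e^{1/\alpha}$. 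For larger $p$, $(1-p^{-\alpha})^{-B-1}\ll_B 1$ and $\sum_{p\le y}p^{-2\alpha}\ll u$ by Chebyshev, using $y^{1-2\alpha}=(u\log(u+1))^2/y$ and $y\ge 2\log x$. For $p\le e^{1/\alpha}$, use $(1-p^{-\alpha})^{-1}\asymp(\alpha\log p)^{-1}$ and $\alpha\ge\log 2/\log y$, and count primes.

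Near $y=\log x$ your choice actually fails. At $y=\log x$ one gets $\alpha\asymp\log\log y/(\log y)^2$. For $h(p^i)=1$ the Euler-product ratio is then at least $\exp\bigl((1+o(1))u\log\log y\bigr)$, which exceeds the allowed $e^{O(u)}$. The parameter must change there. The paper takes $\alpha=1/\log y$ on $[c\log x,2\log x]$, which loses only $x^{1/\log y}=e^{u}$. This is harmless because $(u\log(u+1))^{-u}=x^{-1}e^{O(u)}$ when $y\asymp\log x$.

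\textbf{The case $y\le\log x$.} Your scale for $\alpha$ is wrong here. The choice $\alpha=\log(1+\log x/y)/\log y$ gives $x^{\alpha}=\exp\bigl(u\log(1+\tfrac{\log x}{y})\bigr)$. This is larger than the target $\exp\bigl(O(\tfrac{y}{\log y}\log(1+\tfrac{\log x}{y}))\bigr)$ by a factor $\log x/y$ in the exponent; for $y=2$ this $\alpha$ even exceeds $1$. The alternative $\alpha=c/\log y$ gives $x^{\alpha}=e^{cu}$, which is too large whenever $y=o(\log x)$. The right choice is $\alpha=y/(\log x\log y)$. Then $x^{\alpha}=e^{y/\log y}$, and the first-order terms are at most $\sum_{p\le y}h(p)\ll_{\kappa,B}y/\log y$. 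The higher powers give $\ll_B\sum_{p\le y}\log\bigl(2+\frac{\log x\log y}{y\log p}\bigr)$, which is acceptable after separating $p\le\sqrt y$.

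\textbf{Squarefree case near $y=\log x$.} The window where the sum is nonempty but your $\alpha$ may be $\le 0$ is not settled. Capping at $\alpha\ge 1/2$ yields only $x^{-1/2}$, whereas $x^{-1}e^{O(u)}$ is needed. Instead take $\alpha=1/\log y$ there. Alternatively, keep your $\eta$ and note that $\alpha\ge -O(1/\log y)$ is harmless, because this sum is finite.

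\textbf{The partial-summation error.} What you call the main obstacle is easy to handle. The map $t\mapsto(t^{1-\alpha}-1)/\log t$ is increasing. Hence the remainder in \eqref{eq:mertensk}, which is bounded by $B$, costs at most $2B(y^{1-\alpha}-1)/\log y\ll Bu\log(u+1)/\log y\ll Bu$, since $u\le y$ when $y\ge\log x$.
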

\begin{rem}
	The squarefree case of Lemma \ref{lem:rankin} was established by Tenenbaum and Wu \cite[Lem.~6.1]{TenenbaumWu} with a stronger result.\footnote{A factor of $\kappa/v$ is missing from the right-hand side of the penultimate displayed equation in the proof of \cite[Lem.~6.1]{TenenbaumWu}. Further, $\xi_{\kappa}(u)=\xi(u/\kappa)$ holds for $u\ge (e-1)\kappa$, and otherwise $\xi_{\kappa}(u)=1$. Consequently, the factor $e^{Au\xi_{\kappa}(u)/\log y}$ in the statement of \cite[Lem.~6.1]{TenenbaumWu} should  be $e^{A(u/\kappa)\xi_{\kappa}(u)/\log y}$ if $u\ge (e-1)\kappa$, and otherwise $e^{A(e-1)/\log y}$.} The non-squarefree case of Lemma \ref{lem:rankin} is covered in \cite[Lem.~6.1]{TenenbaumWu} if one restricts to $y\ge (\log x)^{2+\delta}$ for some $\delta>0$. As we are also interested in smaller $y$ we include the routine proof.
\end{rem}
\begin{proof}
	The claim is trivial if $x$ or $u$ are bounded, so we may always assume $x$ and $u$ are sufficiently large.  Under the squarefree assumption, the $d$-sum is empty if $y\le c\log x$ ($c>0$ sufficiently small), so one may assume $y \ge c\log x$ in that case. 
	By Rankin's trick, regardless of the support of $h$, we have
	\[\sum_{\substack{d>x\\ P(d)\le y}} \frac{h(d)}{d} \le x^{\alpha-1} \sum_{P(d)\le y} \frac{h(d)}{d^{\alpha}} = x^{\alpha-1}\prod_{p\le y}\left(\sum_{i=0}^{\infty}\frac{h(p^i)}{p^i}\right)\prod_{p\le y} \frac{\sum_{i=0}^{\infty}h(p^i)/p^{i\alpha}}{\sum_{i=0}^{\infty}h(p^i)/p^i}\]
	for every $\alpha \in (0,1]$. If $x\ge y\ge 2\log x$ we take $\alpha =1-\log(u\log(u+1))/\log y$, if $2\le y\le c\log x$ we take $\alpha=y/(\log x \log y)$, and in the intermediate range $y \in [c\log x, 2\log x]$ we take $\alpha = 1/\log y$. Note that
	\[ x^{\alpha-1} = \begin{cases} (u \log (u+1))^{-u} e^{O(u)} &\text{if }x \ge y \ge \log x, \\ x^{-1} e^{O(y/\log y)} &\text{if }\log x \ge y \ge 2.\end{cases}\]
	Since $\log(1+a+b)\le a +\log(1+b)$ holds for $a,b\ge 0$, we have
	\[ \log  \frac{\sum_{i=0}^{\infty}h(p^i)/p^{i\alpha}}{\sum_{i=0}^{\infty}h(p^i)/p^i}\le  \log  \frac{\sum_{i=0}^{\infty}h(p^i)/p^{i\alpha}}{1+h(p)/p}\le \frac{h(p)}{p}(p^{1-\alpha}-1)+\log(1+ \sum_{i\ge 2}h(p^i)/p^{i\alpha})\]
	and so it suffices to establish two bounds:
	\begin{align}
		\label{eq:firstbnd}
		\sum_{p\le y}  \frac{h(p)}{p}(p^{1-\alpha}-1) &\ll_{\kappa,B} \begin{cases} u &\text{if }x\ge y\ge \log x,\\ \frac{y}{\log y}&\text{if }\log x \ge y \ge 2,\end{cases}\\
		\label{eq:secondbnd} \sum_{p\le y} \log(1+\sum_{i=2}^{\infty}h(p^i)/p^{i\alpha})&\ll_{B} \begin{cases} u& \text{if }x \ge y \ge \log x, \\ 
			\log\big( 1+\frac{\log x}{y}\big)	\frac{y}{\log y} &\text{if }\log x \ge y \ge 2.\end{cases}
	\end{align}
	Estimate~\eqref{eq:firstbnd} follows from \eqref{eq:mertensk}, partial summation and $\int_{0}^t (e^s-1)ds/s \ll e^t/(t+1)$. We turn to \eqref{eq:secondbnd}. If $h$ is supported on squarefrees, \eqref{eq:secondbnd} is trivial. Otherwise, by the assumption $h(p^i)\le (i+1)^B$, the left-hand side of \eqref{eq:secondbnd} is at most 
	\begin{equation}\label{eq:finalp}
		\sum_{p\le y} \log (1+C_B(1-p^{-\alpha})^{-B-1}p^{-2\alpha}).
	\end{equation}
	If $y\ge (\log x)^3$ then $\alpha \ge 2/3+o(1)$, and \eqref{eq:finalp} is $O_B(1)$ (much smaller than needed). If $2 \le y \le 2\log x$, the $p$-th term in \eqref{eq:finalp} is $\ll_B \log(2+ (\log y \log x/(y \log p)))$ by replacing $p^{-2\alpha}$ with $1$, and $(1-p^{-\alpha})^{-1}$ with $(\alpha\log p)^{-1}$. Thus we need to show
	\[ \sum_{p\le y} \log\big(1  +\tfrac{\log y \log x}{y \log p}\big) \ll \log\big(1+\tfrac{\log x}{y}\big)\frac{y}{\log y}.\]
	The contribution of $p\le \sqrt{y}$ is $\ll \sqrt{y}\log(1+\log y\log x/y)$, which is acceptable. The contribution of $p>\sqrt{y}$ is 
	\[ \ll   \log\big(1 + \tfrac{\log x}{y}\big) \sum_{p\le y} 1\ll \log\big(1+\tfrac{\log x}{y}\big)\frac{y}{\log y}\]
	by Chebyshev's bound. Finally we treat $(\log x)^3 \ge y \ge 2\log x$. We consider separately two ranges of $p$. If $p> e^{1/\alpha}$ then $(1-p^{-\alpha})^{-B-1} \ll_B 1$ and so these primes contribute to \eqref{eq:finalp} at most
	\[ \ll_B \sum_{p\le y} p^{-2\alpha} \ll u\]
	by Chebyshev's bound. The primes $p\le e^{1/\alpha}$ contribute to \eqref{eq:finalp}, since $(1-p^{-\alpha})^{-1} \asymp \alpha^{-1}/\log p$ in this range, at most
	\[ \ll_B \sum_{p\le \min\{y,e^{1/\alpha}\}} \log\big( 2 + \frac{\alpha^{-1}}{\log p}\big)\]
	and this can be shown to be $\ll u$ using Chebyshev's bound as well.
\end{proof}
\begin{cor}\label{cor:rankin}
	Let $x\ge y\ge 2$. Under the same notation and assumptions of Lemma \ref{lem:rankin}, we have
	\[ \sum_{\substack{d\le x\\ P(d)\le y}} \frac{h(d)}{d}f\big( u+1 - \frac{\log d}{\log y}\big)\ll \prod_{p\le y}\left(1+\frac{h(p)}{p}\right) u^{-u}e^{O_{\kappa,B}(u\log_3 \max\{u,100\})}\]
for $x\ge y \ge 2$	if $h$ is supported on squarefrees, and 
	\[ \sum_{\substack{d\le x\\ P(d)\le y}} \frac{h(d)}{d}f\big( u+1 - \frac{\log d}{\log y}\big)\ll\prod_{p\le y}\left(\sum_{i=0}^{\infty}\frac{h(p^i)}{p^i}\right) \cdot \begin{cases} u^{-u}e^{O_{\kappa,B}(u\log_3 \max\{u,100\})} &\text{if }x\ge y \ge \log x, \\ x^{-1+O_{\kappa,B}(\log_3 \max\{x,100\} / \log \log x)} &\text{if }\log x \ge y \ge 2\end{cases}\]
	if $h(p^i)\le (i+1)^B$ for $i\ge 2$. Here $f$ is as in Theorem \ref{thm:KM}.
\end{cor}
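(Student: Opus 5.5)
We will prove Corollary \ref{cor:rankin} by a dyadic-type decomposition in $v:=\log d/\log y$, applying Lemma \ref{lem:rankin} to each piece. Set $w:=u+1-v$; then $w\ge 1$ for $d\le x$. Split the $d$-range $[1,x]$ into the blocks $I_j=(x y^{-j-1},\,x y^{-j}]$ for $j=0,1,\dots,\lfloor u\rfloor$, with the last block absorbing everything down to $d=1$. For $d\in I_j$ we have $w\in[j+1,j+2)$. The function $f(t)=e^{-t\log t+t\log_3\max\{t,100\}+Ct}$ is, up to a factor $e^{O(t)}$, decreasing for $t\ge 1$. Hence on $I_j$ we may bound $f(w)\le f(j+1)e^{O(j+1)}$.

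\textbf{Per-block bound.} We bound the sum of $h(d)/d$ over $d\in I_j$, $P(d)\le y$, by the tail sum over $d>xy^{-j-1}$. Lemma \ref{lem:rankin} applies to this tail with $x$ replaced by $x_j:=xy^{-j-1}$, whose $u$-value is $u_j:=u-j-1$. When $u_j\ge 1$, it gives a bound of
\[
\Pi\cdot(u_j\log(u_j+1))^{-u_j}e^{O(u_j)},
\]
where $\Pi$ denotes the relevant Euler product. When $u_j<1$ we use the trivial bound $\sum_{P(d)\le y}h(d)/d\le\Pi$. The block therefore contributes
\[
\ll \Pi\, e^{O(u)}\,(j+1)^{-(j+1)}(j+1)^{O(j+1)\log_3}\,(u-j)^{-(u-j)}.
\]

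\textbf{Combining the blocks.} The key inequality is
\[
a^a b^b\ge \Big(\tfrac{a+b}{2}\Big)^{a+b}\quad\text{with } a+b\approx u,
\]
which gives $(j+1)^{-(j+1)}(u-j)^{-(u-j)}\le (u/2)^{-u}e^{O(u)}=u^{-u}e^{O(u)}$. The factors $e^{(j+1)\log_3\max\{j+1,100\}}$ are at most $e^{u\log_3\max\{u,100\}}$. Summing over the $O(u)$ blocks costs only another factor $e^{O(u)}$. This yields $u^{-u}e^{O(u\log_3\max\{u,100\})}$. In the squarefree case, and in the non-squarefree case with $y\ge\log x$, every intermediate $x_j\ge y$ lies in the first regime of Lemma \ref{lem:rankin}. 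A minor point to watch: when $y\ge\log x$, for blocks with $x_j<y$ we have $x_j\ge 1$ bounded below, so the trivial bound applies, as already accounted for.

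\textbf{The range $\log x\ge y\ge 2$ (main obstacle).} Here $u\ge\log x/\log\log x$ is huge, and the target is the different form $x^{-1+o(1)}$. We repeat the block decomposition, now using the second case of Lemma \ref{lem:rankin} for blocks with $y\le\log x_j$, which gives $x_j^{-1}e^{O(\log(1+\log x_j/y)\,y/\log y)}$. Since $y\le\log x$, the exponent satisfies
\[
\log\log x\cdot\frac{y}{\log y}\le\frac{\log x\,\log\log x}{\log\log x}\cdot\frac{1}{\dots},
\]
and this needs care. We check that it is $o(\log x)$ only up to $\log_3 x/\log\log x$ savings.

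Meanwhile $f(w_j)$ with $w_j\approx\log(x/x_j)/\log y$ gives a factor $(x/x_j)^{-(1-o(1))}$, since $w\log w\approx(\log(x/x_j)/\log y)\log\log x$ and $\log w\approx\log\log x$ dominates $\log y$ roughly. The product of the two factors is then $x^{-1+O(\log_3 x/\log\log x)}$.

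The delicate part is exactly this exponent bookkeeping: verifying that $w\log w-w\log_3 w-Cw\ge(1-O(\log_3 x/\log\log x))\log(x/x_j)$ when $y\le\log x$. For this we use $\log w\ge\log\log x-\log\log y-O(1)$ for the relevant $w$, and otherwise fall back on the trivial bound for small $w$. Blocks with $x_j<y$ are handled by the trivial bound times $f(w)$ with $w\approx u$, which is far smaller than $x^{-1}$.
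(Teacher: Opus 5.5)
For the squarefree case and for $y\ge\log x$, your argument is the paper's. You cut $[1,x]$ into blocks of length $\log y$ in $\log d$, bound $f$ on each block by its value at the endpoint, apply Lemma~\ref{lem:rankin} to the tail beyond the lower end of the block, and combine via $a^ab^b\ge((a+b)/2)^{a+b}$. The paper indexes $d\in(y^\ell,y^{\ell+1}]$ from below and you index from $x$ downwards, which is immaterial. The gap is in the range $\log x\ge y\ge 2$. The paper only sketches this range too: it separates $\ell\le y/\log y$ from $\ell>y/\log y$ according to which regime of Lemma~\ref{lem:rankin} governs the tail, and leaves the rest as a calculus exercise. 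But the computation you outline does not work as stated.

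\textbf{Where it fails.} You propose to verify $w\log w-w\log_3 w-Cw\ge(1-o(1))\log(x/x_j)$, where $\log(x/x_j)\approx w\log y$; that is, $f(w)\le (x/x_j)^{-1+o(1)}$. This requires $\log w\ge(1-o(1))\log y$ and is false for small $w$: for $w\in[1,2)$ the left side is negative. Your patch uses $\log w\ge\log\log x-\log\log y-O(1)$ (that is, $w\gg u$) and a trivial bound otherwise. This leaves a hole: for $w\le u/K$ with $K$ fixed, dropping $f$ and keeping only $x_j^{-1}=x^{-1}y^{w}$ gives $x^{-1+1/K}$, not $x^{-1+o(1)}$.

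\textbf{The fix.} The two factors must be kept together. A second-regime block contributes $\ll \Pi\, x^{-1}e^{\phi(w)+E}$, where
\[
\phi(w)=w\big(\log y-\log w+\log_3\max\{w,100\}+C\big)
\]
and $E$ is the error from Lemma~\ref{lem:rankin}. One must show $\max_{1\le w\le u}\phi(w)\ll \log x\,\log_3 x/\log\log x$, and this is exactly where $y\le \log x$ is used. The function $\phi$ is unimodal with peak at $w^*$, where $\log w^*=\log y+C-1+\log_3 w^*+o(1)$ and $\phi(w^*)\le w^*$.
\begin{itemize}
\item An interior maximum is therefore $\ll\min\{u,\,O_C(y\log_3 x)\}$. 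This is acceptable whether or not $\log y\ge \log\log x/\log_3 x$.
\item The endpoint $w=u$ is the maximizer only if $u\ll_C y\log_3 x$. That forces $\log y\ge\log\log x-2\log_3 x$. Then $\phi(u)\le u(\log\log y+\log_3\max\{u,100\}+C)$, because $y/u=y\log y/\log x\le \log y$, and this is again acceptable.
\end{itemize}

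\textbf{The error term $E$.} Your displayed bound for $E$ is unfinished. The crude estimate $\log(1+\log x_j/y)\le\log\log x$ only gives $E\ll y\log\log x/\log y$, which is $\asymp\log x$ when $y\asymp \log x$. You need $\log(1+\log x/y)\,y/\log y\ll \log x/\log\log x$, which follows for instance by splitting at $y=\sqrt{\log x}$.

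\textbf{First-regime blocks.} These blocks, including those with $x_j<y$, give $u^{-u}e^{O(u\log_3\max\{u,100\})}$. For $y=\log x$ this is about $x^{-1+\log_3 x/\log\log x}$, which is larger than $x^{-1}$, not ``far smaller''. It is still acceptable, but only after checking that $u\log u-O(u\log_3\max\{u,100\})\ge(1-O(\log_3 x/\log\log x))\log x$ for $y\le\log x$.
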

\begin{proof}
We may discard the $d=1$ term. By considering $d\in (y^{\ell},y^{\ell+1}]$ for each $0\le \ell \le u$, we have
\begin{align}
\notag\sum_{\substack{1<d\le x\\ P(d)\le y}} \frac{h(d)}{d}f\big( u+1 - \frac{\log d}{\log y}\big) &\le \sum_{0\le \ell \le u} \big(\max_{d \in (y^{\ell},y^{\ell+1}]} f\big( u+1 - \frac{\log d}{\log y}\big)\big) \sum_{\substack{d>y^{\ell}\\ P(d)\le y}} \frac{h(d)}{d}\\
\label{eq:hmax}&\le (u+1) \max_{0\le \ell \le u} \big(\max_{d \in (y^{\ell},y^{\ell+1}]} f\big( u+1 - \frac{\log d}{\log y}\big)\big) \sum_{\substack{d>y^{\ell}\\ P(d)\le y}} \frac{h(d)}{d}. 
\end{align}
The $(u+1)$-factor is harmless. From properties of $f$, and Lemma~\ref{lem:rankin} with $x=y^{\ell}$, we can estimate the maximum over $d$ as well as the $d$-tail in \eqref{eq:hmax}. If $h$ is supported on squarefrees, we obtain that the right-hand side of \eqref{eq:hmax} is at most
\[  \prod_{p\le y}(\sum_{i=0}^{\infty}\frac{h(p^i)}{p^i}) e^{O_{\kappa,B}(u\log_3 \max\{u,100\})} \max_{0\le \ell \le u}((\ell+1)\log(\ell+2))^{-\ell}(u-\ell+1)^{-(u-\ell)}. \]
The maximum is of size $\ll u^{-u} e^{O(u)}$, concluding the proof in this case. If $h(p^i)\le (1+i)^B$ is assumed for $i\ge 2$, the same argument works when $y\ge \log x$. If $y<\log x$ one needs to consider $u\ge \ell > y/\log y$ separately from $\ell \le y/\log y$ because of the two ranges in Lemma~\ref{lem:rankin}. The maximum over $\ell\le y/\log y$ contributes at most $u^{-u}e^{O_{\kappa,B}(u\log_3 \max\{u,100\})}$, which is acceptable. The maximum over $u\ge \ell >y/\log y$ contributes $\ll x^{-1+O_{\kappa,B}(\log_3 \max\{x,100\} / \log \log x)}$ by a calculus computation that is left to the reader.
\end{proof}
\section{Proofs of corollaries}
\subsection{Proof of Corollary \ref{cor:giden}}
If $u$ is bounded then the corollary is trivial because $u^{-\theta u(1-\varepsilon)} \gg_{\theta,\varepsilon} 1$, so we may assume that $u$ is sufficiently large (in terms of $\theta$ and $\varepsilon$). Let $\Pa$ be the set of all primes up to $y$, so that $\langle \Pa\rangle$ is the set of $y$-smooth numbers. Let $g(n)=1/n$ for all $n\in \langle \Pa \rangle$. Then \eqref{eq:iwaniecd} holds with $\kappa=1$ by Mertens' theorem, and $(G_p)_{p\le y}$ has the same distribution as $(\Geom_p)_{p\le y}$. Theorem \ref{thm:KM} with $S=Ty$ and $T=x^{\theta}/y$ implies in this case that
\begin{equation}\label{eq:mm}
	\begin{split}
	\dtv( (\nu_p(A))_{p\le y}, (\Geom_p)_{p \le y}) &\ll_{\varepsilon} \frac{1}{\log y}\bigg(\sum_{\substack{ d>x^{\theta}/y \\ P(d)\le y}} \frac{1}{d}+\sum_{\substack{ d \le x^{\theta}/y\\ P(d)\le y}} \frac{1}{d} f(u_d)\bigg)  \\&\qquad + \sum_{\substack{d\le x^{\theta}\\ P(d)\le y }}|\PP(d\mid A)-1/d| (2^{\omega(d)}+\log y)
	\end{split}
\end{equation}
by Mertens' theorem, for any $\theta \in (0,1)$ and $\varepsilon>0$, where $u_{d}=\log(x^{\theta}/d)/\log y$. Since $\omega(d) \le \tfrac{\log d}{\log \log d} (1+o(1))$ \cite[Theorem 2.10]{MV}, the last sum in \eqref{eq:mm} is bounded by the sum in the right-hand side of \eqref{eq:dtvgeom}:
\[ \sum_{\substack{d\le x^{\theta}\\ P(d)\le y }}|\PP(d\mid A)-1/d| (2^{\omega(d)}+\log y)\ll x^{1/\log \log x}\sum_{\substack{d\le x^{\theta}\\ P(d)\le y}} |\PP(d\mid A)-1/d|.\]
It remains to bound the $1/d$-sums in \eqref{eq:mm}. This follows from Lemma \ref{lem:rankin} and Corollary \ref{cor:rankin} applied with $h\equiv 1$, and $x$ replaced by $x^{\theta}/y$.
\subsection{Proof of Corollary \ref{cor:gphi}}
Fix $a\in \ZZ\setminus \{0\}$. Let $\Pa$ be the set of all primes up to $y$, so that $\langle \Pa \rangle$ is the set of $y$-smooth numbers. For $n\in \langle \Pa \rangle$ we define $g(n)=1/\phi(n)$ if $(n,a)=1$ and $g(n)=0$ if $(n,a)>1$. Then \eqref{eq:iwaniecd} holds with $\kappa=1$ by Mertens' theorem, and $(G_p)_{p\le y}$ is distributed like $(\widetilde{G}_p)_{p\le y}$. Theorem \ref{thm:KM} with $S=Ty$ and $T=x^{\theta}/y$ implies
\begin{equation}\label{eq:dphi}
	\begin{split}
	\dtv( (\nu_p(A))_{p\le y}, (G_p)_{p \le y}) &\ll_{a,\varepsilon} \frac{1}{\log y}\bigg(\sum_{\substack{ d>x^{\theta}/y \\ P(d)\le y}} \frac{1}{\phi(d)} +\sum_{\substack{ d \le x^{\theta}/y\\ P(d)\le y}} \frac{1}{\phi(d)} u_{d}^{-u_{d}(1-\varepsilon)}\bigg) \\
	&\qquad +  \sum_{\substack{d\le x^{\theta}\\ P(d)\le y }} \left|\PP(d\mid A)-g(d)\right|(2^{\omega(d)}+\log y)
	\end{split}
\end{equation}
holds by Mertens' theorem, where  $u_{d}=\log(x^{\theta}/d)/\log y$. By definition,
\[\sum_{\substack{d\le x^{\theta}\\ P(d)\le y }} \big|\PP(d\mid A)-g(d)\big|(2^{\omega(d)}+\log y)=\sum_{\substack{d\le x^{\theta}\\ P(d)\le y\\(d,a)=1}} \big|\PP(d\mid A)-1/\phi(d)\big|(2^{\omega(d)}+\log y)+\sum_{\substack{d\le x^{\theta}\\ P(d)\le y\\(d,a)>1 }} \PP(d\mid A)(2^{\omega(d)}+\log y).\]
It remains to estimate the sums involving $1/\phi(d)$ in \eqref{eq:dphi}. This follows from Lemma \ref{lem:rankin} and Corollary \ref{cor:rankin} with $h(d)=d/\phi(d)$. See \cite[Lemma 3.5]{FordShifted} for an alternative treatment when $y\ge (\log x)^2$.
\subsection{Proof of Corollary \ref{cor:bern}}
Using Corollary \ref{cor:KM}, Lemma \ref{lem:rankin} and Corollary \ref{cor:rankin} with $h(p)/p=g(p)/(1-g(p))$, we obtain
\begin{cor}\label{cor:KMspec}
Let $A$ be a random variable  taking values in $[1,x]\cap \NN$. Let $y\in [2,x]$. Let $\Pa$ be a subset of the primes up to $y$. Let $g\colon \NN\to [0,1]$ be a multiplicative function supported on the squarefree elements in $\langle \Pa \rangle$, such that for some $\delta>0$ we have $g(p) \le 1-\delta$ for all $p\in \Pa$, and such that for some $\kappa>0$ and $B>0$, 
\[ \bigg|\sum_{p\le t} g(p)\log p-\kappa \log t \bigg|\le B, \qquad \sum_{p\le y}g(p)^2\log^2 p\le B\]
hold for $2 \le t \le y$. Given a parameter $T \ge y$, let $u_T=\log T/\log y$.  Then, for $D'_2$ as in Corollary \ref{cor:KM},
\[	\dtv( (\mathbf{1}_{p\mid A})_{p\in\Pa}, (B_p)_{p\in \Pa}) \ll_{\kappa,B,\delta} u_T^{-u_T}e^{O_{\kappa,B,\delta}(u_T\log_3 \max\{u_T,100\})}+D'_2.\]
\end{cor}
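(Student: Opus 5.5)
We start from Corollary \ref{cor:KM}, so the whole task is to show that $D'_1\ll u_T^{-u_T}e^{O(u_T\log_3\max\{u_T,100\})}$. The term $D'_2$ is carried over unchanged. The plan is to rewrite $D'_1$ in the form treated by Lemma \ref{lem:rankin} and Corollary \ref{cor:rankin}.

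\textbf{Setting up $h$.} Define $h$ as the multiplicative function supported on squarefree elements of $\langle\Pa\rangle$ with $h(p)=p\,g(p)/(1-g(p))$ for $p\in\Pa$. Then for squarefree $d\in\langle\Pa\rangle$ we have
\[ \frac{g(d)}{\prod_{p\mid d}(1-g(p))}=\frac{h(d)}{d}, \]
and $\prod_{p\in\Pa}(1-g(p))=\prod_{p\le y}(1+h(p)/p)^{-1}$. Consequently
\[ D'_1=\prod_{p\le y}\Big(1+\frac{h(p)}{p}\Big)^{-1}\bigg(\sum_{\substack{d>T\\P(d)\le y}}\frac{h(d)}{d}+\sum_{\substack{d\le T\\P(d)\le y}}\frac{h(d)}{d}f(u_d)\bigg), \]
where $u_d=u_T+1-\log d/\log y$, since $y^{u_d}=Ty/d$. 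This matches the argument $f(u+1-\log d/\log y)$ in Corollary \ref{cor:rankin} with $x$ replaced by $T$ and $u=u_T$. In the squarefree case, Lemma \ref{lem:rankin} applied to the first sum and Corollary \ref{cor:rankin} applied to the second give bounds of the form $\prod_{p\le y}(1+h(p)/p)\,u_T^{-u_T}e^{O(u_T\log_3\max\{u_T,100\})}$. Here $(u\log(u+1))^{-u}\le u^{-u}$. The Euler products cancel, which yields the claim.

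\textbf{Checking the hypotheses.} We must verify the two inputs these results need: \eqref{eq:mertensk} for $h$, and \eqref{eq:iwaniecdd} for $g$ (Corollary \ref{cor:KM} requires it).

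For \eqref{eq:mertensk}, note that $h(p)\log p/p=g(p)\log p/(1-g(p))=g(p)\log p+g(p)^2\log p/(1-g(p))$. The correction term is at most
\[ \delta^{-1}\sum_{p\le y}g(p)^2\log p\le\delta^{-1}(\log 2)^{-1}\sum_{p\le y}g(p)^2\log^2 p\le B/(\delta\log 2). \]
Combined with the hypothesis $\big|\sum_{p\le t}g(p)\log p-\kappa\log t\big|\le B$, this gives \eqref{eq:mertensk} with constant $O_{B,\delta}(1)$.

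For \eqref{eq:iwaniecdd}, write $-\log(1-g(p))=g(p)+O_\delta(g(p)^2)$, using $g(p)\le1-\delta$. The squared terms sum to $\sum_{a\le p\le b}g(p)^2\le B/\log^2 a\ll B/\log a$. The main term $\sum_{a\le p\le b}g(p)$ is handled by partial summation from $\sum_{p\le t}g(p)\log p=\kappa\log t+O(B)$:
\[ \sum_{a\le p\le b}g(p)=\kappa\log\frac{\log b}{\log a}+O\Big(\frac{B}{\log a}\Big). \]
The case $a=b$ and the boundary terms are absorbed by $g(p)\le1$ and $\log p\ge\log a$, so the $O(\cdot)$ is fine. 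This yields \eqref{eq:iwaniecdd} with $\kappa$ and a modified constant $B'=O_{\kappa,B,\delta}(1)$.

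\textbf{Main obstacle.} The step needing the most care is the edge cases. When $T$ is not much larger than $y$, $u_T$ is bounded and the claim is trivial: the bound is $\gg1$ while the total variation distance is at most $1$. Lemma \ref{lem:rankin} requires $x\ge y$, which is why we assume $T\ge y$. The squarefree version of Corollary \ref{cor:rankin} holds for all $x\ge y\ge2$, so no separate small-$y$ regime arises. Finally, the constant in $f$ depends on $(\kappa,B')$, hence on $(\kappa,B,\delta)$, which is why the implied constants in the conclusion depend on $\kappa$, $B$ and $\delta$.
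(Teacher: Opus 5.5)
Your proposal is correct and is essentially the paper's own proof, which consists of applying Corollary~\ref{cor:KM} and then the squarefree cases of Lemma~\ref{lem:rankin} and Corollary~\ref{cor:rankin} with $h(p)/p=g(p)/(1-g(p))$; you also verify hypotheses that the paper leaves implicit. One small fix: an isolated term $g(a)$ in the partial summation (e.g.\ when $a=b$) is $O(1/\log a)$ because the hypothesis $\sum_{p\le y}g(p)^2\log^2 p\le B$ gives $g(a)\log a\le\sqrt{B}$, whereas $g(p)\le 1$ alone does not suffice.
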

We use the notation of \S\ref{sec:ellgen}. We may assume $y\le x^{\varepsilon/6}$. Let $h\in \ZZ[t]$. Consider first the first part of Corollary \ref{cor:bern}. Let $\rho(d)$ be the number of distinct roots of $h$ modulo $d$. Recall
\[ \sum_{p \le t}\frac{\rho(p)\log p}{p} = \omega_h \log t + O_h(1)\]
where $\omega_h$ is the number of distinct irreducible factors of $h$ (to prove this, reduce to the case where $h$ is irreducible, and observe that this is Mertens' theorem for number fields). We apply Corollary \ref{cor:KMspec} to $A=h(N_x)$ with $\Pa$ being the set of primes in $(p_0,y]$, and $g(d)=\rho(d)/d$ if $d\in  \langle \Pa \rangle$ is squarefree. Since $\PP(d \mid h(N_x))-\rho(d)/d=O_{h}(\rho(d)/x)$, we obtain for $T=x^{1-\varepsilon/2}$ that
\[		\dtv( (\mathbf{1}_{p\mid h(N_x)})_{p_0<p\le y}, (W_p)_{p_0<p\le y})\ll_{h,\varepsilon}  u^{-(1-\varepsilon)u}+D'_2 \]
holds where
\[	D'_2 \ll_h  x^{-1}\sum_{\substack{d\le x^{1-\varepsilon/3}\\ P(d)\le y}}\mu^2(d)\rho(d)(2^{\omega(d)}+\prod_{p\le y}(1+\rho(p)/p))\ll x^{o(1)}\frac{\Psi(x^{1-\varepsilon/3},y)}{x} \]
using the divisor bound. To conclude recall $\Psi(x,y)\ll_{\varepsilon} xu^{-u}+x^{\varepsilon}$. The proof of the second part of Corollary \ref{cor:bern} is similar, except one estimates $D'_2$ by taking $T=x^{1/2-\varepsilon/2}$ and invoking the Bombieri--Vinogradov theorem.
\subsection*{Acknowledgments}
I thank Kevin Ford for feedback and a useful discussion concerning his work on the Kubilius model, and Brad Rodgers for feedback on an earlier version. I am grateful to the referees for their helpful and constructive comments, and in particular for bringing \cite{Brownian} to my attention. O.G.~is supported by the Israel Science Foundation (grant no.~2088/24), an Alon Fellowship and the Rabbi Dr.~Roger Herst Faculty Fellowship.
\bibliographystyle{abbrv}
\bibliography{references}

\Addresses
\end{document}